\theoremstyle{definition}
\newtheorem{theorem}			     {Theorem}	    [section]
\newtheorem{proposition}  [theorem]	 {Proposition}	
\newtheorem{lemma}	      [theorem]  {Lemma}		
\newtheorem{definition}	  [theorem]  {Definition}
\DeclareMathOperator{\Img}{Im}
\DeclareMathOperator{\Lex}{Lex}
\DeclareMathOperator{\Set}{Set}
\DeclareMathOperator{\op}{op}
\DeclareMathOperator{\Alg}{Alg}
\DeclareMathOperator{\Ab}{Ab}
\DeclareMathOperator{\Mod}{Mod}
\DeclareMathOperator{\Cat}{Cat}
\DeclareMathOperator{\Fr}{Fr}
\DeclareMathOperator{\Def}{Def}
\DeclareMathOperator{\Mal}{Mal}
\DeclareMathOperator{\ev}{ev}
\DeclareMathOperator{\Sub}{Sub}
\DeclareMathOperator{\tw}{tw}
\DeclareMathOperator{\Pt}{Pt}
\newcommand{\CC}{\mathbb{C}}
\newcommand{\MM}{\mathbb{M}}
\newcommand{\T}{\mathcal{T}}
\newcommand{\PP}{\mathbb{P}}
\newcommand{\V}{\mathcal{V}}
\newcommand{\cd}{\xymatrix}
\newcommand{\pb}[1][dr]{\save*!/#1-1.5pc/#1:(-1,1)@^{|-}\restore}
\newcommand{\lpb}[1][ur]{\save*!/#1+1.5pc/#1:(1,1)@_{|-}\restore}
\title{An embedding theorem for regular Mal'tsev categories}
\author{Pierre-Alain Jacqmin\thanks{Financial support from FNRS grants 1.A741.14 and `Bref s\'ejour \`a l'\'etranger' and
partial support from South African National Research Foundation are gratefully acknowledged.}}
\date{15 February 2017}
\begin{document}

\maketitle

\vspace{-15pt}

\begin{abstract}

In this paper, we obtain a non-abelian analogue of Lubkin's embedding theorem for abelian categories. Our theorem faithfully embeds any small regular Mal'tsev
category $\mathbb{C}$ in an $n$-th power of a particular locally finitely presentable regular Mal'tsev category. The embedding preserves and reflects finite limits,
isomorphisms and regular epimorphisms, as in the case of Barr's embedding theorem for regular categories.
Furthermore, we show that we can take $n$ to be the (cardinal) number of subobjects of the terminal object in $\mathbb{C}$.

\end{abstract}

\small{\textit{2000 Mathematics Subject Classification.} 18B15, 08B05, 08A55, 18C35, 18C10.\\
\textit{\small{Keywords.}} embedding theorem, Mal'tsev category, regular category, locally finitely presentable category, essentially algebraic category.}

\section{Introduction}

One of the most famous results in category theory might be the Yoneda lemma and as a corollary, the Yoneda embedding. As we know, for every small category $\CC$,
it constructs a fully faithful embedding $\CC \hookrightarrow \Set^{\CC^{\op}}$ which preserves limits. Therefore, for some type of statements about limits,
one only needs to produce the proof in $\Set$ to get the result in any category.
One can thus say that the category $\Set$ of sets `represents' all categories.

Moreover, $\Set$ also represents regular categories (i.e., finitely complete categories with coequalisers of kernel pairs and pullback stable regular epimorphisms~\cite{barr2}).
Indeed, Barr's embedding theorem~\cite{barr} enables us to restrict to $\Set$ the proof of some statements about finite limits and regular epimorphisms in regular categories.
Note that the key ingredients of regularity are precisely about finite limits and regular epimorphisms.
In the same way, a wide range of statements about finite limits and finite colimits in abelian categories can be restricted to $\Ab$,
the category of abelian groups~\cite{lubkin,freyd,mitchell}.

Whereas abelian categories do not cover important algebraic examples such as the categories of groups or rings, the notion of a regular category is in some sense too general
because every (quasi)-variety of universal algebras is a regular category. Therefore, one needs some intermediate classes of categories to study the categorical properties of groups.
To achieve this, regular Mal'tsev categories have been introduced in~\cite{CLP} as regular categories in which the composition of equivalence relations is commutative.
This is equivalent to the property that every reflexive relation is an equivalence~\cite{CLP}. This condition is equivalent in the more general context of finitely complete categories
to the condition that each relation is difunctional; and this is the property that defines Mal'tsev categories in this context~\cite{CPP}. Their name comes from the mathematician
Mal'tsev who characterised~\cite{maltsev} (one-sorted finitary) algebraic categories in which this property holds as the ones whose corresponding theory admits a ternary term
$p(x,y,z)$ satisfying the identities $p(x,y,y)=x=p(y,y,x)$. These characterisations are recalled in Section~\ref{section regular mal'tsev categories}.

The aim of this paper is to prove an embedding theorem for regular Mal'tsev categories.
The `representing category' will be the category of models of a finitary essentially algebraic theory (i.e., a locally finitely presentable category~\cite{GU,AR}).
As in the algebraic case, objects in such categories are given by an $S$-sorted set $A$ (i.e., an object in $\Set^S$, for a set $S$ of sorts) endowed with finitary operations
$A_{s_1}\times \cdots \times A_{s_n} \rightarrow A_s$ satisfying some given equations. The difference is that some of these operations can be only partially defined
(and defined exactly for those $n$-tuples satisfying some given equations involving totally defined operations).
We recall this concept in Section~\ref{section finitary essentially algebraic categories}.
We also characterise there those categories of models which are regular and those that are Mal'tsev
(via a ternary Mal'tsev term as in the varietal case, see Theorem~\ref{mod gamma maltsev}).
We then construct the `representing' regular Mal'tsev category $\Mod(\Gamma_{\Mal})$ using those characterisations.

Section~\ref{section the embedding theorem} is devoted to the proof of our embedding Theorem~\ref{embedding theorem}:
every small regular Mal'tsev category $\CC$ admits a faithful embedding into $\Mod(\Gamma_{\Mal})^{\Sub(1)}$ which preserves and reflects finite limits,
isomorphisms and regular epimorphisms. Here, $\Sub(1)$ denotes the set of subobjects of the terminal object $1$ in $\CC$.
This proof uses three main ingredients: approximate Mal'tsev co-operations (introduced in~\cite{BJ} and recalled in Section
\ref{section regular mal'tsev categories}), the Yoneda embedding $\CC \hookrightarrow \Lex(\CC,\Set)^{\op}$ (which lifts the property of being regular Mal'tsev from
$\CC$ to $\Lex(\CC,\Set)^{\op}$, see Proposition~\ref{regular maltsev preserved}) and a $\CC$-projective covering of $\Lex(\CC,\Set)^{\op}$~\cite{barr,grothendieck 1}.
We can notice that, with this technique, we could also have embedded each small regular Mal'tsev category in a power of the category of approximate Mal'tsev algebras. These are pairs
of sets $A,B$ together with two operations $p \colon A^3 \rightarrow B$, $a \colon A \rightarrow B$ satisfying the axioms $p(x,y,y)=a(x)=p(y,y,x)$. However, this category is not
a Mal'tsev category and therefore we have had to refine this argument considering the essentially algebraic category $\Mod(\Gamma_{\Mal})$, which is a regular Mal'tsev category.

Due to this embedding theorem, one can reduce the proof of some propositions about finite limits and regular epimorphisms in a regular Mal'tsev category to 
the particular case of $\Mod(\Gamma_{\Mal})$. One is then allowed to use elements and (approximate) Mal'tsev operations to prove
some statements in a regular Mal'tsev context. An example of such an application is given in Section~\ref{section applications}.
Note that our embedding is not full, but it reflects isomorphisms, which is enough for such applications.
Indeed, fullness of an embedding $\CC \hookrightarrow \MM^\PP$ is not helpful when we look at the components $\ev_P \colon \MM^\PP \rightarrow \MM$
(which is what we do when we reduce a proof to $\MM$).

\subsection*{Acknowledgement}

The author would like to warmly thank Zurab Janelidze for proposing this project and helpful discussions about it.
The author is also grateful to Stellenbosch University for their kind hospitality during the (South-African) spring 2014.

\section{Regular Mal'tsev categories} \label{section regular mal'tsev categories}

A \textit{regular category} is a finitely complete category with coequalisers of kernel pairs and pullback stable regular epimorphisms~\cite{barr2}.
They admits a (regular epi, mono)-factorisation system (i.e., a factorisation system $(\mathcal{E},\mathcal{M})$ where $\mathcal{E}$ is the class of all regular epimorphisms and
$\mathcal{M}$ is the class of all monomorphisms). Moreover, in such categories, two relations $R \rightarrowtail X \times Y$ and $S \rightarrowtail Y \times Z$
can be composed to form a relation $S \circ R \rightarrowtail X \times Z$. This gives rise to the so called `calculus of relations'.

In this context of regular categories, Mal'tsev categories were introduced in~\cite{CLP}. Two years later, in~\cite{CPP}, the authors enlarged this notion of a Mal'tsev category
in the context of finitely complete categories.

\begin{definition} \cite{CPP}
A finitely complete category $\CC$ is a Mal'tsev category if every reflexive relation $r \colon R \rightarrowtail X \times X$ is an equivalence relation.
\end{definition}

In order to recall some well-known characterisations of Mal'tsev categories, we first need to recall what are difunctional relations.

\begin{definition} \cite{CPP}
A difunctional relation in a finitely complete category $\CC$ is an internal relation $r=(r_1,r_2) \colon R \rightarrowtail X \times Y$ such that,
when we consider the following diagram
where both squares are pullbacks and $\tw$ the twisting isomorphism,
$$\cd{S \pb \ar[rr] \ar@{}[d]|(.3){}="A" \ar@{>->} "A";[d] && R \ar@{}[d]|(.3){}="B" \ar@{>->} "B";[d]^-{r} & T \lpb \ar[l] \ar@{}[d]|(.3){}="C" \ar@{>->} "C";[d]\\
R \times R \ar[r]_-{r_2 \times r_1} & Y \times X \ar[r]_-{\tw} & X \times Y & R \times R \ar[l]^-{r_1 \times r_2}}$$
the canonical monomorphism $S \cap T \rightarrowtail T$ is an isomorphism.
\end{definition}

In the category $\Set$ of sets (or any algebraic category), a relation $R \subseteq X \times Y$ is difunctional if it satisfies
$$(xRy \wedge xRy' \wedge x'Ry') \Rightarrow x'Ry$$ for all $x,x' \in X$ and $y,y' \in Y$.

\begin{theorem}\cite{CPP}
Let $\CC$ be a finitely complete category. The following statements are equivalent.
\begin{enumerate}
\item $\CC$ is a Mal'tsev category.
\item Any reflexive relation in $\CC$ is symmetric.
\item Any reflexive relation in $\CC$ is transitive.
\item Any relation $r \colon R \rightarrowtail X \times Y$ is difunctional.
\item Any relation $r \colon R \rightarrowtail X \times X$ is difunctional.
\end{enumerate}
\end{theorem}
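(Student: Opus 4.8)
The plan is to prove the cycle of implications $(1) \Rightarrow (2), (3)$; $(2) \Rightarrow (1)$ (and symmetrically $(3) \Rightarrow (1)$); $(1) \Leftrightarrow (4)$; and $(4) \Leftrightarrow (5)$. The implication $(1) \Rightarrow (2)$ and $(1) \Rightarrow (3)$ are immediate from the definition: an equivalence relation is by definition reflexive, symmetric and transitive, so if every reflexive relation is an equivalence then in particular it is symmetric and transitive.

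For the converse $(2) \Rightarrow (1)$: given a reflexive relation $r=(r_1,r_2)\colon R \rightarrowtail X\times X$, we already have reflexivity by hypothesis, and symmetry from $(2)$. It remains to derive transitivity. The standard trick is to consider the relation $R^{\circ}\circ R$ (or $R\circ R^{\circ}$) where $R^\circ$ is the opposite relation; using symmetry we have $R^\circ = R$, so we must show $R\circ R \subseteq R$. The key point is that $R \circ R$ is itself reflexive (since $R$ is), and one shows that a reflexive relation containing $R$ and sitting inside $X\times X$ forces containment — more precisely one uses that reflexivity of $R$ gives $R \subseteq R\circ R$ always, and for the reverse one exploits that $R\circ R$ together with the diagonal generates a reflexive relation whose symmetry (applied again via $(2)$, now to $R\circ R$) yields the needed inclusion. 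The symmetric argument starting from $(3)$ instead of $(2)$ proves $(3) \Rightarrow (1)$, using that a reflexive transitive relation which is also symmetric is an equivalence, and deriving symmetry from transitivity plus reflexivity by a dual relational computation.

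For $(1) \Leftrightarrow (4)$: assuming $\CC$ is Mal'tsev, given any relation $r\colon R \rightarrowtail X\times Y$, one forms the reflexive relation on $X+Y$ (or more concretely works with $R^\circ \circ R$ and $R \circ R^\circ$, which are equivalence relations by hypothesis since they are reflexive) and deduces difunctionality from the computation $R \circ R^\circ \circ R = R$, which unwinds exactly to the set-theoretic condition $(xRy \wedge x'Ry \wedge x'Ry') \Rightarrow xRy'$ in the internal language, i.e. to the isomorphism $S\cap T \rightarrowtail T$ in the diagram of the definition. Conversely, if every relation is difunctional, then a reflexive relation $R$ satisfies $R = R\circ R^\circ \circ R \supseteq R \circ R$ (using reflexivity to insert diagonals), giving transitivity, and similarly symmetry, hence $R$ is an equivalence. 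Finally $(4) \Rightarrow (5)$ is trivial (a special case), and $(5) \Rightarrow (4)$ follows by the coproduct-to-single-object reduction: a relation $R \rightarrowtail X\times Y$ gives a relation on $X+Y$ (via the coproduct injections, when coproducts exist) or, avoiding coproducts, one checks difunctionality of $R \rightarrowtail X \times Y$ is equivalent to difunctionality of a suitable relation on $X\times Y$ built from $R$; alternatively one notes $(5)$ already suffices to run the argument $(5)\Rightarrow(1)\Rightarrow(4)$ directly via the endorelations $R\circ R^\circ$ and $R^\circ\circ R$ on $X$ and $Y$ respectively.

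The main obstacle I expect is carrying out the relational calculus \emph{internally} and diagram-chase-free: in $\Set$ all these manipulations ($R\circ R^\circ \circ R = R$, $R\circ R$ reflexive, etc.) are one-line verifications with elements, but in a general regular category one must phrase them via the composition of relations (which needs the regular epi–mono factorisation), track monomorphisms carefully, and match the abstract pullback diagram in the definition of difunctionality to the relational identity $R = R\circ R^\circ\circ R$. Getting that dictionary right — and in particular verifying that the canonical mono $S\cap T \rightarrowtail T$ being an isomorphism is \emph{exactly} the statement $R \supseteq R\circ R^\circ\circ R$ (the reverse inclusion being automatic) — is the technical heart; everything else is formal once the calculus of relations is in hand, which is why the proof is attributed to and developed in~\cite{CPP} rather than reproved here in full detail.
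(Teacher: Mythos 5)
The paper itself gives no proof of this theorem---it is recalled from~\cite{CPP}---so your argument has to stand on its own, and in the stated generality it does not. The theorem concerns a merely \emph{finitely complete} category $\CC$, but the machinery your proof runs on (composition of relations, opposite relations, identities such as $R\circ R^\circ\circ R=R$, and the claim that difunctionality ``unwinds exactly'' to such an identity) requires images, i.e.\ a (regular epi, mono)-factorisation, hence a regular category. You acknowledge this yourself in your final paragraph, but regularity is precisely the hypothesis you do not have: in effect you have sketched the regular-category characterisation (Theorem~\ref{characterisations mal'tsev for regular}, due to~\cite{CLP}), not the finitely complete statement at hand. The proof in~\cite{CPP} (and in standard references) avoids relational composition altogether: all five conditions are finite-limit statements, so one may argue with generalised elements via Yoneda, and the key device is to build an auxiliary reflexive relation out of $R$ itself. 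For example, for $(2)\Rightarrow(4)$, given $r\colon R\rightarrowtail X\times Y$ define the relation $S$ on the object $R$ by $(x,y)\,S\,(x',y')$ iff $x\,R\,y'$; it is reflexive because $(x,y)\in R$, and its symmetry is \emph{literally} the difunctionality of $R$. Transitivity of the same $S$ (applied to $(x',y')\,S\,(x,y')\,S\,(x,y)$) gives $(3)\Rightarrow(4)$, and $(4)\Rightarrow(2),(3)$ follow by instantiating difunctionality on a reflexive endorelation using reflexivity alone; $(4)\Rightarrow(1)$ and $(1)\Rightarrow(2),(3)$ are then immediate.

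Two further concrete gaps. First, your $(5)\Rightarrow(4)$ reduction via a relation on $X+Y$ uses coproducts, which are not assumed; the alternative you gesture at is the right idea but is left unspecified---the endorelation $S$ on $X\times Y$ given by $((x,y),(x',y'))\in S$ iff $x\,R\,y'$ (a pullback of $R$, so it exists with finite limits only) works, and its difunctionality implies that of $R$. Second, your direct $(2)\Rightarrow(1)$ sketch (``$R\circ R$ together with the diagonal generates a reflexive relation whose symmetry yields the needed inclusion'') is not an argument even granting regularity: symmetry of a reflexive relation containing $R\circ R$ does not force $R\circ R\subseteq R$, and nothing in the sketch supplies the missing inclusion. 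The clean route is the one above, $(2)\Rightarrow(4)\Rightarrow(3)$ and $(3)\Rightarrow(4)\Rightarrow(2)$, after which $(1)$ is the conjunction of $(2)$, $(3)$ and reflexivity.
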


In a regular context, we have even more characterisations.

\begin{theorem}\cite{CLP} \label{characterisations mal'tsev for regular}
Let $\CC$ be a regular category. The following statements are equivalent.
\begin{enumerate}
\item $\CC$ is a Mal'tsev category.
\item The composite of two equivalence relations on the same object is an equivalence relation.
\item If $R$ and $S$ are equivalence relations on the same object, then $R \circ S = S \circ R$.
\item \label{mal'tsev reflexive graphs} For every reflexive graph,
$$\cd{G \ar@<2pt>[r]^-{d} \ar@<-2pt>[r]_-{c} & X \ar@<3pt>@/^0.9pc/[l]^-{s}}$$
the pullback of $(d,c)$ along $(c,d)$ is a regular epimorphism.
$$\cd{P \pb \ar[r] \ar@{->>}[d] & G \ar[d]^-{(d,c)} \\ G \ar[r]_-{(c,d)} & X \times X}$$
\end{enumerate}
\end{theorem}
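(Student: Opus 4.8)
My plan is to establish the cycle $(1)\Rightarrow(2)\Rightarrow(3)\Rightarrow(1)$ and, separately, the equivalence $(1)\Leftrightarrow(4)$, working throughout in the calculus of relations of the regular category $\CC$ and freely invoking the characterisations of Mal'tsev categories recalled above from~\cite{CPP}; I write $R^{\circ}$ for the opposite of a relation $R$. The implication $(1)\Rightarrow(2)$ is immediate: for equivalence relations $R,S$ on $X$, the composite $S\circ R\rightarrowtail X\times X$ is again a relation, and it is reflexive because $\Delta_X\subseteq R\subseteq S\circ R$; being a reflexive relation in a Mal'tsev category, it is an equivalence relation. For $(2)\Rightarrow(3)$, this composite $S\circ R$ is then an equivalence relation, hence symmetric, so $S\circ R=(S\circ R)^{\circ}=R^{\circ}\circ S^{\circ}=R\circ S$.

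The heart of the argument is $(3)\Rightarrow(1)$. By the characterisations recalled above it suffices to prove that every relation $D\rightarrowtail A\times B$ is difunctional, i.e.\ that $D\circ D^{\circ}\circ D=D$ (the inclusion $D\subseteq D\circ D^{\circ}\circ D$ being automatic). I would write $d_1\colon D\to A$ and $d_2\colon D\to B$ for the two jointly monic projections and observe that, as relations, $d_2\circ d_1^{\circ}=D$ and $d_1\circ d_2^{\circ}=D^{\circ}$, whereas each $d_i^{\circ}\circ d_i$ is the kernel pair $\mathrm{Eq}(d_i)$ of $d_i$, an equivalence relation on the object $D$. Applying~$(3)$ to the two equivalence relations $\mathrm{Eq}(d_1)$ and $\mathrm{Eq}(d_2)$ on $D$ gives the identity $d_1^{\circ}d_1 d_2^{\circ}d_2=d_2^{\circ}d_2 d_1^{\circ}d_1$ of relations on $D$; forming $d_2\circ(-)\circ d_1^{\circ}$ of both sides and simplifying by means of the identity $d_i\circ d_i^{\circ}\circ d_i=d_i$ (valid for any morphism), the left-hand side collapses to $D\circ D^{\circ}\circ D$ and the right-hand side to $D$, which is the difunctionality of $D$. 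Hence $\CC$ is Mal'tsev.

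For $(1)\Leftrightarrow(4)$: given a reflexive graph $(G,d,c,s)$, factor $(d,c)\colon G\to X\times X$ as a regular epimorphism $q\colon G\twoheadrightarrow R$ followed by a monomorphism $r\colon R\rightarrowtail X\times X$, so that $R$ is a reflexive relation. Assuming $(1)$, $R$ is an equivalence relation, in particular symmetric, so $(c,d)=\tw\circ(d,c)$ also factors through $r$, say as $r\circ\sigma\circ q$ with $\sigma\colon R\to R$ an isomorphism; since $r$ is monic, pulling back $(d,c)$ along $(c,d)$ amounts to pulling back $q$ along $\sigma\circ q$, and as the latter is a regular epimorphism and regular epimorphisms are stable under pullback, the corresponding projection onto $G$ is a regular epimorphism, i.e.\ $(4)$ holds. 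Conversely, assuming $(4)$, let $E\rightarrowtail X\times X$ be an arbitrary reflexive relation, viewed as a reflexive graph through its projections $e_1,e_2$ and reflexivity section. By $(4)$ the projection $P\to E$ of the pullback of $(e_1,e_2)$ along $(e_2,e_1)$ is a regular epimorphism, and since $(e_1,e_2)$ is monic this projection is also a monomorphism, hence an isomorphism; consequently $E$ and $E^{\circ}$ coincide as subobjects of $X\times X$, so every reflexive relation is symmetric and $\CC$ is Mal'tsev.

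I expect the principal difficulty to lie in the step $(3)\Rightarrow(1)$: the relational computation above is short only once the calculus of relations in a regular category has been set up carefully, namely that $(-)^{\circ}$ is an involution, that composition of relations is associative and order-preserving, and that a morphism $f$, regarded as a relation, satisfies $f^{\circ}\circ f\supseteq\Delta$ and $f\circ f^{\circ}\subseteq\Delta$, whence $f\circ f^{\circ}\circ f=f$. If one wishes to bypass that formalism, the same step can be carried out with explicit pullbacks and image factorisations, at the price of a considerably longer calculation; the remaining implications are routine.
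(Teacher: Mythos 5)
Your proof is correct. The paper itself offers no proof of this theorem --- it is recalled from~\cite{CLP} with a citation only --- and your argument (the cycle $(1)\Rightarrow(2)\Rightarrow(3)\Rightarrow(1)$ in the calculus of relations, using the kernel pairs $d_1^{\circ}\circ d_1$ and $d_2^{\circ}\circ d_2$ on $D$ together with $f\circ f^{\circ}\circ f=f$, plus the direct equivalence $(1)\Leftrightarrow(4)$ via the (regular epi, mono)-factorisation of $(d,c)$) is exactly the standard argument from the cited literature, so there is nothing substantive to compare. The only point worth a sentence in a polished write-up is the silent identification of the paper's pullback-style definition of difunctionality with the relational inclusion $D\circ D^{\circ}\circ D\subseteq D$, which holds in any regular category but is an assertion rather than a definition.
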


The name of Mal'tsev categories comes from the following result of A.I.\ Mal'tsev.

\begin{theorem}\cite{maltsev} \label{maltsev theorem}
Let $\T$ be a finitary one-sorted algebraic theory. Then, the category of $\T$-algebras $\Alg_{\T}$ is a Mal'tsev category if and only if $\T$ contains a ternary term $p(x,y,z)$
satisfying the identities $$p(x,y,y)=x=p(y,y,x).$$
\end{theorem}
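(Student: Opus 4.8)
This is a classical fact; I would prove the two implications separately. The implication ``$\Leftarrow$'' is the easy one: assume $\T$ has a ternary term $p$ with $p(x,y,y)=x=p(y,y,x)$, and verify the defining property of a Mal'tsev category directly. In $\Alg_{\T}$ a relation $r\colon R\rightarrowtail X\times X$ is, up to isomorphism, a subalgebra of $X\times X$, and reflexivity means $(a,a)\in R$ for every $a\in X$. Since $R$ is closed under the componentwise operations of $X\times X$, the triple $(b,b),(a,b),(a,a)\in R$ yields
$$p\big((b,b),(a,b),(a,a)\big)=\big(p(b,a,a),\,p(b,b,a)\big)=(b,a)\in R,$$
so $R$ is symmetric; and the triple $(a,b),(b,b),(b,c)\in R$ yields $p\big((a,b),(b,b),(b,c)\big)=\big(p(a,b,b),\,p(b,b,c)\big)=(a,c)\in R$, so $R$ is transitive. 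Hence every reflexive relation in $\Alg_{\T}$ is an equivalence relation.

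For ``$\Rightarrow$'' I would reverse this computation, extracting the term from a permutability of congruences on a free algebra. Assume $\Alg_{\T}$ is a Mal'tsev category; being a variety it is in particular regular, so by Theorem~\ref{characterisations mal'tsev for regular} any two equivalence relations on the same object of $\Alg_{\T}$ commute. Let $G$ be the free $\T$-algebra on three generators $x,y,z$, and let $\theta$ and $\psi$ be the congruences on $G$ generated by the pairs $(x,y)$ and $(y,z)$ respectively. Since $(x,y)\in\theta$ and $(y,z)\in\psi$, we have $(x,z)\in\psi\circ\theta$; by permutability, $(x,z)\in\theta\circ\psi$, so there is some $w\in G$, say $w=p(x,y,z)$ for a ternary term $p$, with $(x,w)\in\psi$ and $(w,z)\in\theta$.

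It remains to read off the identities. The quotient $G/\psi$ is obtained from $G$ by identifying $y$ with $z$, hence is the free $\T$-algebra on the (distinct) images of $x$ and $y$; so $(x,w)\in\psi$, which says that $x$ and $p(x,y,z)$ have the same image in $G/\psi$, is exactly the statement that $p(x,y,y)=x$ is an identity of $\T$. Symmetrically, $G/\theta$ is obtained by identifying $x$ with $y$, and $(w,z)\in\theta$ gives the identity $p(y,y,x)=x$. Hence $p(x,y,y)=x=p(y,y,x)$, as required.

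The substantive direction is ``$\Rightarrow$'', and its crux is recognising \emph{where} the Mal'tsev hypothesis can be used: the two principal congruences $\theta,\psi$ on the free algebra on three generators always satisfy $(x,z)\in\psi\circ\theta$, and the hypothesis is precisely what upgrades this to $(x,z)\in\theta\circ\psi$. The only point needing care is to keep the composites $\psi\circ\theta$ and $\theta\circ\psi$ distinct, so that the resulting membership decodes into the two identities in the exact form stated; the passage to the quotients is then routine universal algebra.
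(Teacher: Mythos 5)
Your proof is correct. Note that the paper does not prove this statement at all: it is quoted as Theorem~\ref{maltsev theorem} with a citation to Mal'tsev's original article, so there is no in-paper argument to compare against; what you give is the standard classical proof. Both halves are sound: the ``$\Leftarrow$'' direction correctly uses that a reflexive internal relation in $\Alg_{\T}$ is a reflexive subalgebra of $X\times X$ closed under the componentwise action of $p$, and the ``$\Rightarrow$'' direction correctly passes through regularity and Theorem~\ref{characterisations mal'tsev for regular} to permute the principal congruences $\theta=\theta(x,y)$ and $\psi=\theta(y,z)$ on the free algebra on three generators, reads the witness $w$ as a ternary term $p(x,y,z)$, and decodes $(x,w)\in\psi$ and $(w,z)\in\theta$ in the quotients $G/\psi$ and $G/\theta$ (which are free on two generators) to obtain exactly $p(x,y,y)=x$ and $p(y,y,x)=x$; your care with the order of the composites $\psi\circ\theta$ versus $\theta\circ\psi$ is indeed the only delicate point, and you handle it consistently.
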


Finally, we will need one more characterisation of Mal'tsev categories in a regular context.
In~\cite{BJ}, the authors define an \textit{approximate Mal'tsev co-operation on $X$}
(for an object $X$ in a finitely complete category with binary coproducts) as a morphism
$p \colon Y \rightarrow 3X$ together with an \textit{approximation $a \colon Y \rightarrow X$} such that the square
$$\cd{Y \ar[r]^-{p} \ar[d]_-{a} & 3X \ar[d]^-{\left( \begin{smallmatrix} \iota_1 & \iota_1 \\ \iota_2 & \iota_1 \\ \iota_2 & \iota_2 \end{smallmatrix}\right)} \\
X \ar[r]_-{(\iota_1,\iota_2)} & (2X)^2}$$
commutes. For each object $X$, one can build the \textit{universal approximate Mal'tsev co-operation $(p^X,a^X)$ on $X$} by considering the following pullback.
$$\cd{M(X) \pb \ar[r]^-{p^X} \ar[d]_-{a^X} & 3X \ar[d]^-{\left( \begin{smallmatrix} \iota_1 & \iota_1 \\ \iota_2 & \iota_1 \\ \iota_2 & \iota_2 \end{smallmatrix}\right)} \\
X \ar[r]_-{(\iota_1,\iota_2)} & (2X)^2}$$

\begin{theorem} [Theorem~4.2 in~\cite{BJ}] \label{approximate co-operations}
Let $\CC$ be a regular category with binary coproducts. The following statements are equivalent:
\begin{enumerate}
\item \label{approx 1} $\CC$ is a Mal'tsev category.
\item \label{approx 2} For each $X \in \CC$, there is an approximate Mal'tsev co-operation on $X$ for which the approximation $a$ is a regular epimorphism.
\item \label{approx 3} For each $X \in \CC$, the universal approximate Mal'tsev co-operation on $X$ is such that the approximation $a^X$ is a regular epimorphism.
\end{enumerate}
\end{theorem}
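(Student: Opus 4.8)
The plan is to prove the cycle $(\ref{approx 1}) \Rightarrow (\ref{approx 3}) \Rightarrow (\ref{approx 2}) \Rightarrow (\ref{approx 1})$. Here $(\ref{approx 3}) \Rightarrow (\ref{approx 2})$ is immediate, since the universal approximate Mal'tsev co-operation is in particular an approximate Mal'tsev co-operation; I would also record the converse $(\ref{approx 2}) \Rightarrow (\ref{approx 3})$, which follows from the universal property: any approximate Mal'tsev co-operation $(p,a)$ on $X$ factors through $(p^X, a^X)$ by a unique $f$ with $a^X \circ f = a$, and in a regular category, if a composite $a^X \circ f$ is a regular (equivalently, strong) epimorphism, then so is $a^X$.

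For $(\ref{approx 1}) \Rightarrow (\ref{approx 3})$, I would take the (regular epi, mono)-factorisation $3X \overset{e}{\twoheadrightarrow} \bar{M} \overset{m}{\rightarrowtail} (2X)^2$ of the canonical morphism $\left( \begin{smallmatrix} \iota_1 & \iota_1 \\ \iota_2 & \iota_1 \\ \iota_2 & \iota_2 \end{smallmatrix}\right)$ and view $\bar{M}$ as a relation on $2X$. First, $\bar{M}$ is reflexive: the copairing $2X \to 3X$ of the first and third coproduct injections of $3X$ becomes, after composition with the canonical morphism, the diagonal of $2X$, so $\Delta_{2X} \leqslant \bar{M}$. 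Second, the middle coproduct injection $X \to 3X$ is carried by the canonical morphism to $(\iota_2, \iota_1) \colon X \to (2X)^2$, whence $(\iota_2, \iota_1) \leqslant \bar{M}$. Now the Mal'tsev hypothesis forces the reflexive relation $\bar{M}$ to be an equivalence relation, in particular symmetric, so $(\iota_1, \iota_2) \leqslant \bar{M}$ too, i.e.\ $(\iota_1, \iota_2) \colon X \to (2X)^2$ factors through $m$. Since $m$ is a monomorphism, the pullback square defining $M(X)$ may be computed over $\bar{M}$ instead of over $(2X)^2$; this presents $a^X$ as a pullback of the regular epimorphism $e$, and regular epimorphisms being stable under pullback in a regular category, $a^X$ is a regular epimorphism.

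For the last implication $(\ref{approx 2}) \Rightarrow (\ref{approx 1})$, I would prove that every reflexive relation $r = \langle r_1, r_2 \rangle \colon R \rightarrowtail Z \times Z$ in $\CC$ is symmetric, which by~\cite{CPP} characterises Mal'tsev categories. The idea is to internalise the classical varietal argument --- a reflexive compatible relation $R$ in a Mal'tsev variety is symmetric because the Mal'tsev term applied componentwise to $\big((y,y),(x,y),(x,x)\big)$ gives $(y,x)$ --- by applying the hypothesis to the object $R$ itself and using the identity morphism of $R$ as a ``generic element''. Let $\delta \colon Z \to R$ witness reflexivity ($r_1 \delta = r_2 \delta = \mathrm{id}_Z$), let $(p,a)$ be an approximate Mal'tsev co-operation on $R$ with $a \colon Y \twoheadrightarrow R$ a regular epimorphism, and let $\phi \colon Y \to R$ be $p$ followed by the morphism $3R \to R$ restricting to $\delta r_2$, $\mathrm{id}_R$, $\delta r_1$ on the three summands. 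Composing $\phi$ on the left with $r_1$ and with $r_2$, distributing over the copairing, using $r_i \delta = \mathrm{id}_Z$, and invoking the two defining identities of the co-operation (each of which rewrites a copairing-composite of $p$ as a coproduct injection composed with $a$), one gets $r_1 \phi = r_2 a$ and $r_2 \phi = r_1 a$, hence $r \circ \phi = \langle r_2, r_1 \rangle \circ a$. As $a$ is a regular epimorphism, passing to images yields $\Img(\langle r_2, r_1 \rangle) = \Img(r \circ \phi) \leqslant \Img(r) = R$, that is, $\langle r_2, r_1 \rangle$ factors through $r$; running the same argument for the (again reflexive) relation $\langle r_2, r_1 \rangle$ gives the opposite inclusion, so $R$ is symmetric.

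I expect $(\ref{approx 2}) \Rightarrow (\ref{approx 1})$ to be the main obstacle: the decisive point is to realise that the (merely approximate) co-operation must be applied to the relation $R$ rather than to its base, and then that the approximation $a$ precludes an equality on the nose, so the conclusion is obtained at the level of images --- equivalently, after pulling back along the cover $a$. Once the right objects have been chosen, both nontrivial directions reduce to bookkeeping with coproduct injections, copairings, the twisting isomorphism, and pullback-stability of regular epimorphisms in $\CC$.
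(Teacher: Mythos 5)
The paper itself contains no proof of this statement: it is quoted directly from Bourn and Janelidze (Theorem~4.2 in~\cite{BJ}), so there is no internal argument to compare yours against. Judged on its own, your proposal is correct and self-contained. For (\ref{approx 1})$\Rightarrow$(\ref{approx 3}), the image $\bar{M}\rightarrowtail (2X)^2$ of the canonical morphism is indeed a reflexive relation on $2X$ (the copairing of the first and third injections of $3X$ is sent to the diagonal), it contains $(\iota_2,\iota_1)$ via the middle injection, symmetry then gives $(\iota_1,\iota_2)\leqslant\bar{M}$, and since both legs of the defining pullback factor through the monomorphism $\bar{M}\rightarrowtail(2X)^2$, that pullback coincides with the pullback over $\bar{M}$, exhibiting $a^X$ as a pullback of the regular epimorphism $3X\twoheadrightarrow\bar{M}$; pullback-stability finishes it. For (\ref{approx 2})$\Rightarrow$(\ref{approx 1}), your computation checks out: with $\phi=[\delta r_2,\,1_R,\,\delta r_1]\circ p$, the two components of the defining square (namely $[\iota_1,\iota_2,\iota_2]p=\iota_1 a$ and $[\iota_1,\iota_1,\iota_2]p=\iota_2 a$) give $r_1\phi=r_2a$ and $r_2\phi=r_1a$, hence $\langle r_2,r_1\rangle a=r\phi$, and since $a$ is a regular epimorphism, comparing images yields $R^{\op}\leqslant R$; you could even omit the second pass, as applying the twist isomorphism to $R^{\op}\leqslant R$ immediately gives $R\leqslant R^{\op}$. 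Your side remark (\ref{approx 2})$\Rightarrow$(\ref{approx 3}), via the universal property of the pullback and right-cancellability of strong (= regular) epimorphisms, is also sound, though not needed for the cycle.
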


\section{Finitary essentially algebraic categories} \label{section finitary essentially algebraic categories}

A \textit{locally finitely presentable category} is a cocomplete category which has a strong set of generators formed by finitely presentable objects.
We know from~\cite{GU} that locally finitely presentable categories are, up to equivalence, exactly the categories of the form $\Lex(\CC,\Set)$
for a small finitely complete category $\CC$ (i.e., the category of finite limit preserving functors from $\CC$ to $\Set$).
Moreover, they have been further characterised as `finitary essentially algebraic categories'.

\subsection{Essentially algebraic theories and their models}

An essentially algebraic category is, roughly speaking, a category of (many-sorted) algebraic structures with partial operations.
The domains of definition of these partial operations are themselves defined as the solution sets of some totally defined equations.

More precisely, an \textit{essentially algebraic theory} is a quintuple $\Gamma =(S,\Sigma,E,\Sigma_t,\Def)$ where $S$ is a set (the set of sorts),
$\Sigma$ is an $S$-sorted signature of algebras, $E$ is a set of $\Sigma$-equations, $\Sigma_t \subseteq \Sigma$ is the subset of `total operation symbols' and $\Def$
is a function assigning to each operation symbol $\sigma \colon \prod_{i \in I} s_i \rightarrow s$ in $\Sigma \setminus \Sigma_t$ a set $\Def(\sigma)$ of $\Sigma_t$-equations in the
variables $x_i$ of sort $s_i$ ($i \in I$).

A \textit{model} $A$ of an essentially algebraic theory $\Gamma$ is an $S$-sorted set $(A_s)_{s \in S} \in \Set^S$ together with, for each operation symbol
$\sigma \colon \prod_{i \in I} s_i \rightarrow s$ in $\Sigma$, a partial function $$\sigma^A \colon \prod_{i \in I} A_{s_i} \rightarrow A_s$$ such that:
\begin{itemize}
\item for each $\sigma \in \Sigma_t$, $\sigma^A$ is defined everywhere,
\item for each $\sigma \colon \prod_{i \in I} s_i \rightarrow s$ in $\Sigma \setminus \Sigma_t$ and any $(a_i \in A_{s_i})_{i \in I}$, $\sigma^A((a_i)_{i \in I})$
is defined if and only if the elements $a_i$'s satisfy the equations of $\Def(\sigma)$ in $A$,
\item $A$ satisfies the equations of $E$ wherever they are defined.
\end{itemize}

If $A$ and $B$ are two $\Gamma$-models, a \textit{homomorphism $f \colon A \rightarrow B$ of models} is an $S$-sorted function $(f_s \colon A_s \rightarrow B_s)_{s \in S}$ such that,
for each $\sigma \colon \prod_{i \in I} s_i \rightarrow s$ in $\Sigma$ and any $(a_i \in A_{s_i})_{i \in I}$ such that $\sigma^A((a_i)_{i \in I})$ is defined,
\begin{equation} \label{homomorphism of models}
f_s(\sigma^A((a_i)_{i \in I}))=\sigma^B((f_{s_i}(a_i))_{i \in I}).
\end{equation}
Notice that if (\ref{homomorphism of models}) holds for all $\sigma \in \Sigma_t$, then, for each $\sigma' \in \Sigma \setminus \Sigma_t$, $\sigma'^B((f_{s_i}(a_i))_{i \in I})$
is defined if $\sigma'^A((a_i)_{i \in I})$ is, while the converse does not hold in general. The category of $\Gamma$-models and their homomorphisms is denoted by $\Mod(\Gamma)$.
A category which is equivalent to some model category $\Mod(\Gamma)$ for an essentially algebraic theory $\Gamma$ is called \textit{essentially algebraic}.

If all arities of $\Sigma$ are finite, if each equation of $E$ and of all $\Def(\sigma)$'s uses only a finite number of variables and if all sets $\Def(\sigma)$'s are also finite,
$\Gamma$ is called a \textit{finitary essentially algebraic theory}.
A category which is equivalent to some category $\Mod(\Gamma)$ for a finitary essentially algebraic theory $\Gamma$ is called a \textit{finitary essentially algebraic category}.
As mentioned above, they are exactly the locally finitely presentable categories.

\begin{theorem} \cite{GU,AR} \label{characterisation of locally finitely presentable categories}
A category is locally finitely presentable if and only if it is a finitary essentially algebraic category.
\end{theorem}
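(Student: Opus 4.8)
The plan is to establish both implications by comparing, for a small finitely complete category $\CC$ and a finitary essentially algebraic theory $\Gamma$, the constructions $\Lex(\CC,\Set)$ and $\Mod(\Gamma)$, using as an input the characterisation recalled above (due to~\cite{GU}) of locally finitely presentable categories as exactly those of the form $\Lex(\CC,\Set)$ with $\CC$ small and finitely complete.

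\textbf{From finitary essentially algebraic to locally finitely presentable.} Given a finitary essentially algebraic theory $\Gamma = (S,\Sigma,E,\Sigma_t,\Def)$, I would build a small finitely complete syntactic category $\CC_\Gamma$ and prove $\Mod(\Gamma) \simeq \Lex(\CC_\Gamma,\Set)$. Its objects are finite contexts $(x_1 : s_1, \dots, x_n : s_n)$ equipped with a finite set $\phi$ of formal equations between $\Sigma$-terms that are provably defined under $\phi$; its morphisms are tuples of such terms, modulo provable equality, compatible with the constraint on the target. One checks that $\CC_\Gamma$ has a terminal object (the empty context), binary products (concatenation) and equalisers (adjoining to $\phi$ the equation of the two parallel tuples), hence all finite limits, and that it is \emph{small} precisely because $\Gamma$ is finitary, so that only set-many contexts, terms and equations occur. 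A left-exact functor $F \colon \CC_\Gamma \to \Set$ then determines a $\Gamma$-model: the carriers are the values of $F$ on one-variable contexts; each operation symbol $\sigma$ is represented by a morphism out of the context cut out by $\Def(\sigma)$, and since $F$ sends the corresponding inclusion (an equaliser) to a monomorphism, this yields a partial operation with exactly the prescribed domain of definition; the equations of $E$ hold because $F$ is a functor. This assignment is an equivalence $\Lex(\CC_\Gamma,\Set) \xrightarrow{\ \sim\ } \Mod(\Gamma)$, natural transformations corresponding to homomorphisms. By~\cite{GU}, $\Mod(\Gamma)$ is therefore locally finitely presentable.

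\textbf{From locally finitely presentable to finitary essentially algebraic.} Conversely, by~\cite{GU} a locally finitely presentable category is equivalent to $\Lex(\CC,\Set)$ for some small finitely complete $\CC$, and I would construct a finitary essentially algebraic theory $\Gamma_\CC$ with $\Mod(\Gamma_\CC) \simeq \Lex(\CC,\Set)$. Take the set of sorts to be $\ob(\CC)$; for every morphism of $\CC$ a unary total operation symbol, with equations in $E$ saying that these interpret identities as identities and composites as composites (so that a model of this fragment is exactly a functor $\CC \to \Set$); a nullary total operation at the sort of the terminal object of $\CC$, together with the equation making any two elements of that sort equal, so that it is interpreted by a singleton; for each chosen binary product $c = a \times b$ in $\CC$, with projections $p,q$, a binary total operation $\langle -,- \rangle$ and the equations $p\langle x,y\rangle = x$, $q\langle x,y\rangle = y$, $\langle p(z),q(z)\rangle = z$; and for each chosen equaliser $e \colon c \rightarrowtail a$ of $f,g \colon a \to b$, a \emph{partial} unary operation $j$ with $\Def(j) = \{\,f(x)=g(x)\,\}$ and the equations $e(j(x)) = x$, $j(e(z)) = z$. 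All arities are at most binary, every set $\Def(\sigma)$ is a singleton, and every equation has finitely many variables, so $\Gamma_\CC$ is finitary. Since a functor preserves all finite limits once it preserves the terminal object, binary products and equalisers, and since in each case the witnessing operation is uniquely determined (being the inverse of a map the equations force to be a bijection), the comparison functor $\Mod(\Gamma_\CC) \to \Lex(\CC,\Set)$ is an equivalence, homomorphisms again matching natural transformations. Hence the given category is finitary essentially algebraic.

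\textbf{Where the work is.} The equational bookkeeping and the identification of homomorphisms with natural transformations are routine. The two delicate points are: in the first implication, that $\CC_\Gamma$ is small and finitely complete --- this is exactly where finitarity is needed, and it relies on the fact that domains of partial operations are cut out by \emph{total} equations only, so that ``provably defined'' involves no circularity; in the second implication, that restricting to the terminal object, binary products and equalisers genuinely captures all of left-exactness and that the witnessing operations are forced, so that the comparison functor is an equivalence and not merely faithful.
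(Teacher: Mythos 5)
This statement is not proved in the paper at all: it is quoted verbatim from the literature, with the proof delegated to Gabriel--Ulmer and Ad\'amek--Rosick\'y, so there is no internal argument to compare yours against. Judged on its own, your two-directional sketch is a correct outline of the classical proof and matches the standard route in those references: the passage from a finitary essentially algebraic theory $\Gamma$ to a small finitely complete syntactic category $\CC_\Gamma$ with $\Mod(\Gamma)\simeq\Lex(\CC_\Gamma,\Set)$ is the Gabriel--Ulmer/cartesian-theory direction (in Ad\'amek--Rosick\'y it is phrased via limit sketches, which amounts to the same thing), and your converse encoding of lex functors --- sorts for objects, total unary operations for morphisms, property-like operations forcing preservation of the terminal object, binary products and equalisers, with the equaliser witness the only genuinely partial operation and $\Def(j)$ consisting of total equations --- is exactly how those sources present it. Two small remarks. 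First, your attribution of the role of finitarity to smallness of $\CC_\Gamma$ is slightly off: $\CC_\Gamma$ would be small for any set-sized signature; finitarity is what makes every operation symbol and every $\Def(\sigma)$ expressible over a \emph{finite} context, so that operations become morphisms out of genuine finite-limit objects of $\CC_\Gamma$ and the representables become finitely presentable models --- without this the comparison $\Lex(\CC_\Gamma,\Set)\to\Mod(\Gamma)$ fails, not smallness. Second, the sentence ``this assignment is an equivalence'' in the first direction hides the real work (every lex functor is determined by its values on one-variable contexts, and every model extends to a lex functor, with naturality matching homomorphy); your sketch of both halves is right, but in a full write-up this is where most of the verification sits, together with the stratified, non-circular definition of ``provably defined'' terms that you correctly flag.
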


The basic examples of finitary essentially algebraic categories are finitary (many-sorted) quasi-varieties and so in particular finitary (many-sorted) varieties.
The category $\Cat$ of small categories is also finitary essentially algebraic.

Let us now focus our attention to the forgetful functor $U \colon \Mod(\Gamma) \rightarrow \Set^S$ for a finitary essentially algebraic theory $\Gamma$.
As expected, we can easily prove that $U$ creates small limits so that they exists in $\Mod(\Gamma)$ and are computed in each sort as in $\Set$.
Moreover, $U$ preserves and reflects monomorphisms and isomorphisms. Thus a homomorphism of $\Gamma$-models is a monomorphism (resp.~an isomorphism) if and only if
it is injective (resp.~bijective) in each sort. In view of Theorem~\ref{characterisation of locally finitely presentable categories}, $\Mod(\Gamma)$ is also cocomplete,
but colimits are generally harder to describe. In addition, we do not have an easy characterisation of regular epimorphisms and $\Mod(\Gamma)$ is in general not regular.

We are now going to describe a left adjoint for the forgetful functor $U \colon \Mod(\Gamma) \rightarrow \Set^S$.
In order to do so, we refer the reader to~\cite{AR} for the definition of terms in $\Sigma$ in the variables of an $S$-sorted set $X$.
By abuse of notation, by a (finitary) term $\tau \colon \prod_{i=1}^n s_i \rightarrow s$ of $\Sigma$,
we mean a term of $\Sigma$ of sort $s$ over the $S$-sorted set $X$ which contains exactly one formal symbol $x_i$ of sort $s_i$ for each $1 \leqslant i \leqslant n$.
Since $\Mod(\Gamma)$ is cocomplete, $U$ has a left adjoint as long as a reflection of those finite $S$-sorted sets $X$ along $U$ exists. Let us describe it.
If $\tau,\tau' \colon \prod_{i=1}^n s_i \rightarrow s$ are two terms of $\Sigma$, we say that $\tau=\tau'$ is a \textit{theorem} of $\Gamma$ if
$\tau(a_1, \dots, a_n)=\tau'(a_1, \dots, a_n)$ holds in any $\Gamma$-model $A$ and for any interpretation of the variables of $X$ in $A$
(i.e., $S$-sorted function $X \rightarrow U(A)$) such that both $\tau(a_1, \dots, a_n)$ and $\tau'(a_1, \dots, a_n)$ are defined.
Then, we define the set of \textit{everywhere-defined terms $\prod_{i=1}^n s_i \rightarrow s$}
as the smallest subset of the set of terms of $\Sigma$ in the variables of $X$ such that:
\begin{itemize}
\item for each element $x_k \in X_{s_k}$, the $k$-th projection $\prod_{i=1}^n s_i \rightarrow s_k$ is an everywhere-defined term,
\item if $(\tau_j \colon \prod_{i=1}^{n} s_i \rightarrow s^j)_{j \in \{1, \dots, m\}}$ are
everywhere-defined terms and $\sigma \colon \prod_{j=1}^m s^j \rightarrow s$ is an operation symbol
of $\Sigma$ such that, either $\sigma \in \Sigma_t$ or $\sigma \in \Sigma \setminus \Sigma_t$ and, for each equation $(\mu,\mu')$ of $\Def(\sigma)$,
$\mu(\tau_1, \dots, \tau_n)=\mu'(\tau_1, \dots, \tau_n)$ is a theorem of $\Gamma$,
then $\sigma(\tau_1, \dots, \tau_n) \colon \prod_{i=1}^n s_i \rightarrow s$ is everywhere-defined.
\end{itemize}
Now, $\Fr(X)_s$ is the set of equivalence classes of everywhere-defined terms $\tau \colon \prod_{i=1}^n s_i \rightarrow s$ of $\Sigma$,
where we identify the terms $\tau$ and $\tau'$ if and only if $\tau=\tau'$ is a theorem of $\Gamma$. The operations on $\Fr(X)$ and
the $S$-sorted function $X \rightarrow U(\Fr(X))$ are defined in the obvious way. The fact that this $S$-sorted function is the reflection of $X$ along $U$
can be deduced easily from the definitions. We thus have an adjunction $\Fr \dashv U$.

\subsection{Regular Mal'tsev finitary essentially algebraic categories}

For a small finitely complete category $\CC$, $\Lex(\CC,\Set)$ is a regular category if and only if $\CC$ is weakly coregular~\cite{CPR}.
Moreover, in~\cite{GP}, the authors describe the categories $\CC$ for which $\Lex(\CC,\Set)$ is a regular Mal'tsev category.
On the other hand, the categories of the form $\Mod(\Gamma)$
for a finitary $\Gamma$ can equivalently be written as $\Lex(\CC,\Set)$ for some small finitely complete category $\CC$.
This category $\CC$ can be chosen (up to equivalence) as the dual of the full subcategory of finitely presentable objects in $\Mod(\Gamma)$~\cite{GU}.
However, those objects are hard to describe in general and it is not easy to derive a direct characterisation of those $\Gamma$'s
for which $\Mod(\Gamma)$ is regular or regular Mal'tsev from the previous ones.
In this subsection we give a direct characterisation of those finitary essentially algebraic theories whose categories of models are regular, and separately,
those whose categories of models are Mal'tsev categories. The more general case of a (non necessarily finitary) essentially algebraic theory is similar and appears in~\cite{jacqmin}.

We start by describing a (strong epi, mono)-factorisation system in $\Mod(\Gamma)$, for an arbitrary finitary essentially algebraic theory $\Gamma$
(by a (strong epi, mono)-factorisation system we mean a factorisation system $(\mathcal{E},\mathcal{M})$ where $\mathcal{E}$ is the class of all strong epimorphisms and
$\mathcal{M}$ is the class of all monomorphisms). Let $f \colon A \rightarrow B$ be a homomorphism of $\Gamma$-models.
By abuse of notation, we will often write $f(a)$ instead of $f_s(a)$ for $s \in S$ and $a \in A_s$.
We consider the submodel $I$ of $B$ such that, for all $s \in S$, \label{image}
\begin{align*}
I_s=\lbrace \tau(f(a_1), \dots, f(a_n)) \mid
&a_i \in A_{s_i} \text{ and } \tau \colon \prod_{i=1}^n s_i \rightarrow s \text{ is a finitary term of } \Sigma \text{ which}\\
& \text{is defined in } B \text{ on } (f(a_1), \dots, f(a_n)) \rbrace.
\end{align*}
We can thus view $I$ as the smallest submodel of $B$ for which $f_s(a) \in I_s$ for all $s \in S$ and $a \in A_s$.
This means that the corestriction $p \colon A \rightarrow I$ of $f$ to $I$ is a strong epimorphism and $f$ factors as $f=ip$ with $i$ the inclusion $I \hookrightarrow B$.
As usual, we will refer to $I=\Img(f)$ as the \textit{image of $f$}.

Before being able to describe those $\Gamma$'s for which $\Mod(\Gamma)$ is regular, we need the following lemma.

\begin{lemma} \label{lemma for regularity}
Let $\Gamma$ be a finitary essentially algebraic theory and $\theta \colon \prod_{i=1}^n s_i \rightarrow s$ a finitary term of $\Gamma$.
If $(a_i \in A_{s_i})_{i \in \{1,\dots,n\}}$ are elements of a
$\Gamma$-model $A$, we can find a strong epimorphism $q \colon A \twoheadrightarrow B$ in $\Mod(\Gamma)$ such that $\theta(q(a_1),\dots,q(a_n))$ is defined and
if $f \colon A \rightarrow C$ is a homomorphism such that $\theta(f(a_1),\dots,f(a_n))$ is defined, then $f$ factors uniquely through $q$.
\end{lemma}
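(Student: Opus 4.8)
The plan is to argue by induction on the structure of the term $\theta$, forcing the equations that govern the domain of each partial operation symbol occurring in $\theta$ to become true, one operation symbol at a time, and realising each such forcing as a coequaliser in $\Mod(\Gamma)$. I will freely use the following facts: $\Mod(\Gamma)$ is cocomplete by Theorem~\ref{characterisation of locally finitely presentable categories}; every coequaliser is a strong epimorphism and strong epimorphisms compose; and, by an easy induction on terms from the remark following~(\ref{homomorphism of models}), every homomorphism $h \colon A \rightarrow A'$ preserves definedness of terms, that is, if $\rho(\vec a)$ is defined in $A$ then $\rho(h(\vec a))$ is defined in $A'$ and equals $h(\rho(\vec a))$.

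If $\theta$ is a projection, then $\theta(a_1, \dots, a_n)$ is already defined and $q = \mathrm{id}_A$ works. Otherwise write $\theta = \sigma(\tau_1, \dots, \tau_m)$ with $\sigma \colon \prod_{j=1}^m t_j \rightarrow s$ in $\Sigma$ and each $\tau_j \colon \prod_{i=1}^n s_i \rightarrow t_j$ a term of $\Sigma$. First I would make the subterms defined, sequentially: put $B^{(0)} = A$, $r_0 = \mathrm{id}_A$, and, given a strong epimorphism $r_{j-1} \colon A \twoheadrightarrow B^{(j-1)}$ along which $\tau_1, \dots, \tau_{j-1}$ are defined at the images of the $a_i$, apply the induction hypothesis to $\tau_j$ and these images to get a strong epimorphism $q_j \colon B^{(j-1)} \twoheadrightarrow B^{(j)}$; since homomorphisms preserve definedness, $r_j := q_j r_{j-1}$ is a strong epimorphism along which $\tau_1, \dots, \tau_j$ are all defined at the images of the $a_i$. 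After $m$ steps we obtain $r := r_m \colon A \twoheadrightarrow B^{(m)}$, and we set $b_j := \tau_j(r(a_1), \dots, r(a_n)) \in B^{(m)}_{t_j}$.

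It remains to make $\sigma$ defined at $(b_1, \dots, b_m)$. If $\sigma \in \Sigma_t$ this holds already and $q := r$ works. If $\sigma \in \Sigma \setminus \Sigma_t$, write $\Def(\sigma) = \{(\mu_k, \mu'_k) \mid 1 \leqslant k \leqslant l\}$; each $\mu_k, \mu'_k$ is a $\Sigma_t$-term, hence everywhere-defined, so $\mu_k(b_1, \dots, b_m)$ and $\mu'_k(b_1, \dots, b_m)$ are genuine elements of $B^{(m)}$. Via the adjunction $\Fr \dashv U$ these elements correspond to homomorphisms out of a finite free model, and I take $q_{m+1} \colon B^{(m)} \twoheadrightarrow B$ to be the coequaliser of the resulting parallel pair; then $q_{m+1}$ is a strong epimorphism, and (using that the $\mu_k$ are everywhere-defined) a homomorphism $g \colon B^{(m)} \rightarrow C$ factors through $q_{m+1}$ if and only if $(g(b_1), \dots, g(b_m))$ satisfies the equations of $\Def(\sigma)$ in $C$, i.e.\ if and only if $\sigma^C$ is defined at $(g(b_1), \dots, g(b_m))$. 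Now set $q := q_{m+1} r \colon A \twoheadrightarrow B$. Since $\tau_j(q(\vec a)) = q_{m+1}(b_j)$ and $q_{m+1}$ itself satisfies the above condition, $\theta(q(a_1), \dots, q(a_n))$ is defined. Conversely, given $f \colon A \rightarrow C$ with $\theta(f(\vec a))$ defined, every subterm $\tau_j$ is defined at $f(\vec a)$, so the universal properties of $q_1, \dots, q_m$ produce successively a unique $g_m \colon B^{(m)} \rightarrow C$ with $g_m r = f$; moreover $g_m(b_j) = \tau_j(f(\vec a))$, so the definedness of $\theta(f(\vec a))$ forces $(g_m(b_1), \dots, g_m(b_m))$ to satisfy $\Def(\sigma)$, whence the universal property of the coequaliser gives a unique $g \colon B \rightarrow C$ with $gq = f$. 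Uniqueness of the factorisation is clear since $q$, being a strong epimorphism, is an epimorphism.

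I do not anticipate any serious obstacle: the argument has no deep idea in it, and its only genuinely load-bearing categorical input is that coequalisers are strong epimorphisms. The one point requiring a little foresight is the decision to impose definedness \emph{operation symbol by operation symbol}, so that at the last step the condition to be forced is a plain system of equations between everywhere-defined terms, which is exactly what a coequaliser imposes, rather than a conditional statement entangling definedness with equality; the most tedious part is then merely keeping straight the chain of successive universal factorisations in the verification of the universal property of $q$.
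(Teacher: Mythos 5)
Your proof is correct and is essentially the paper's argument: an induction on the term in which the $\Def(\sigma)$-equations (being equations between everywhere-defined $\Sigma_t$-terms) are forced by a coequaliser, whose universal property then yields the required unique factorisation. The only differences are organisational: you peel off the outermost operation symbol and treat the subterms by induction first, whereas the paper peels off the innermost layer of operation symbols (writing $\theta=\theta'(\sigma_1(\vec{x}),\dots,\sigma_m(\vec{x}))$) and applies the induction hypothesis to the remaining term, and you realise the forcing as the coequaliser of a parallel pair out of a finite free model $\Fr(Y)$ rather than of the two projections of a generated submodel of $A \times A$.
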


\begin{proof}
We are going to prove this lemma by induction on the number of steps used in the construction of the term $\theta$.
If $\theta$ is a projection (or any everywhere-defined term), $1_A$ is the homomorphism we are looking for.
Now, suppose $\theta$ uses the operation symbols or projections
$\sigma_1, \dots, \sigma_m \in \Sigma \cup \{p_k \colon \prod_{i=1}^n s_i \rightarrow s_k \, | \, 1 \leqslant k \leqslant n\}$ as first step of its construction.
Thus, $\theta$ can be written as
$$\theta(x_1,\dots,x_n)=\theta'(\sigma_1(x_1,\dots,x_n),\dots,\sigma_m(x_1,\dots,x_n))$$
where $\theta'$ uses less steps than $\theta$ to be constructed. Let $R$ be the smallest submodel of $A \times A$
which contains $(\chi(a_1,\dots,a_n),\chi'(a_1,\dots,a_n))$ for all equations $(\chi,\chi') \in \Def(\sigma_j)$ and all $j$
such that $\sigma_j \in \Sigma \setminus \Sigma_t$. Let $q_1$ be the coequaliser of $r_1$ and $r_2$
$$\cd{R \ar@<2pt>[r]^-{r_1} \ar@<-2pt>[r]_-{r_2} & A \ar@{->>}[r]^-{q_1} & B_1}$$
where $r_i=p_ir$ with $r$ the inclusion $R \hookrightarrow A \times A$ and $p_1$ and $p_2$ the projections.
Thus, in $B_1$, all $\sigma_j(q_1(a_1), \dots, q_1(a_n))$ are defined. Now, we use the induction hypothesis on $\theta'$ to build a universal strong epimorphism
$q_2 \colon B_1 \twoheadrightarrow B$ such that
\begin{align*}
&\theta'(q_2(\sigma_1(q_1(a_1),\dots,q_1(a_n))),\dots,q_2(\sigma_m(q_1(a_1),\dots,q_1(a_n))))\\
&= \theta'(\sigma_1(q_2q_1(a_1),\dots,q_2q_1(a_n)),\dots,\sigma_m(q_2q_1(a_1),\dots,q_2q_1(a_n)))\\
&= \theta(q_2q_1(a_1),\dots,q_2q_1(a_n))
\end{align*}
is defined. Let us prove that $q_2q_1$ is the strong epimorphism we are looking for. Let $f \colon A \rightarrow C$ be a homomorphism such that $\theta(f(a_1),\dots,f(a_n))$
is defined. Since the kernel pair $R[f]$ of $f$ contains $(\chi(a_1,\dots,a_n),\chi'(a_1,\dots,a_n))$ for all equations $(\chi,\chi')\in \Def(\sigma_j)$ and all $j$
such that $\sigma_j \in \Sigma \setminus \Sigma_t$, we have $R \subseteq R[f]$ and $fr_1=fr_2$. Therefore, $f$ factors through $q_1$ as $f=gq_1$.
Finally, $g$ factors through $q_2$ since 
\begin{align*}
&\theta(f(a_1), \dots, f(a_n))\\
&= \theta(gq_1(a_1), \dots, gq_1(a_n))\\
&= \theta'(\sigma_1(gq_1(a_1),\dots,gq_1(a_n)),\dots,\sigma_m(gq_1(a_1),\dots,gq_1(a_n)))\\
&= \theta'(g(\sigma_1(q_1(a_1),\dots,q_1(a_n))),\dots,g(\sigma_m(q_1(a_1),\dots,q_1(a_n))))
\end{align*}
is defined.
\end{proof}

We are now able to describe regular finitary essentially algebraic categories.

\begin{proposition} \label{mod gamma regular}
Let $\Gamma$ be a finitary essentially algebraic theory. Then $\Mod(\Gamma)$ is a regular category if and only if, for each finitary
term $\theta \colon \prod_{i=1}^n s_i \rightarrow s$ of $\Gamma$, there exists in~$\Gamma$:
\begin{itemize}
\item a finitary term $\pi \colon \prod_{j=1}^m s'_j \rightarrow s$,
\item for each $1 \leqslant j \leqslant m$, an everywhere-defined term $\alpha_j \colon s \rightarrow s'_j$ and
\item for each $1 \leqslant j \leqslant m$, an everywhere-defined term $\mu_j \colon \prod_{i=1}^n s_i \rightarrow s'_j$ 
\end{itemize}
such that
\begin{itemize}
\item $\pi(\alpha_1(x),\dots,\alpha_m(x))$ is everywhere-defined,
\item $\pi(\alpha_1(x),\dots,\alpha_m(x))=x$ is a theorem of $\Gamma$,
\item $\alpha_j(\theta(x_1,\dots,x_n))=\mu_j(x_1,\dots,x_n)$ is a theorem of $\Gamma$ for each $1 \leqslant j \leqslant m$.
\end{itemize}
\end{proposition}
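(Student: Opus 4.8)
The plan is to prove both implications by characterising regularity of $\Mod(\Gamma)$ in terms of stability under pullback of the strong epimorphisms described before the statement. Recall that $\Mod(\Gamma)$ is finitely complete with limits computed sortwise, and that it has a (strong epi, mono)-factorisation system with image $\Img(f)$ as on page~\pageref{image}. Moreover, in a finitely complete category with such a factorisation system, being regular is equivalent to: strong epimorphisms are pullback stable (this also gives coequalisers of kernel pairs, since the strong-epi part of the factorisation of the coequaliser cocone coincides with the quotient). So in both directions the work reduces to analysing when a pullback of a strong epimorphism along an arbitrary homomorphism is again a strong epimorphism. The key observation is that a strong epimorphism $p\colon A\twoheadrightarrow B$ is exactly a homomorphism for which every element of every $B_s$ is of the form $\theta(p(a_1),\dots,p(a_n))$ for some finitary term $\theta$ of $\Gamma$ defined on the relevant tuple; so the failure of pullback stability is precisely the failure to lift such a representation across a pullback square, one term $\theta$ at a time. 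Lemma~\ref{lemma for regularity} is the tool that lets us make a given term $\theta$ defined by passing to a universal strong-epi quotient, and it will be used repeatedly to reduce to the case of a single generator.

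For the ``if'' direction, assume the term-theoretic condition and let
$$\cd{P \pb \ar[r]^-{p'} \ar[d]_-{g'} & A' \ar@{->>}[d]^-{p} \\ A \ar[r]_-{g} & B}$$
be a pullback with $p$ a strong epimorphism; I must show $p'$ is a strong epimorphism, i.e.\ that every element of every $A_s$ lies in $\Img(p')_s$. Such an element has the form $\theta(p'(c_1),\dots,p'(c_k))$ is what we want, but a priori we only know it equals some $g$-image; instead we start from an arbitrary $a\in A_s$, write $p(a)=\theta(p(a'_1),\dots,p(a'_n))$ for suitable $a'_i\in A'$ and a finitary term $\theta$ of $\Gamma$ defined on that tuple (possible since $p$ is strongly epic, after enlarging by Lemma~\ref{lemma for regularity} if needed so that $\theta$ itself is literally defined there). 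Now apply the hypothesis to $\theta$, obtaining $\pi,\alpha_j,\mu_j$ with $\pi(\alpha_1(x),\dots,\alpha_m(x))=x$ everywhere and $\alpha_j\circ\theta=\mu_j$ everywhere. Then in $A$ we have $a=\pi(\alpha_1(a),\dots,\alpha_m(a))$, while $p(\alpha_j(a))=\alpha_j(p(a))=\alpha_j(\theta(\dots))=\mu_j(p(a'_1),\dots,p(a'_n))=p(\mu_j(a'_1,\dots,a'_n))$, the last equality because $\mu_j$ is everywhere-defined and $p$ a homomorphism. Hence the pairs $(\alpha_j(a),\mu_j(a'_1,\dots,a'_n))$ lie in the pullback $P$, call them $c_j$, and since $\pi(\alpha_1(x),\dots,\alpha_m(x))$ is everywhere-defined the element $\pi(c_1,\dots,c_m)\in P$ exists and is sent by $g'$ to $\pi(\alpha_1(a),\dots,\alpha_m(a))=a$. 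Thus $a\in\Img(g')_s$; as $s$ and $a$ were arbitrary, $g'$ is strongly epic, and regularity follows.

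For the ``only if'' direction, assume $\Mod(\Gamma)$ regular and fix a finitary term $\theta\colon\prod_i s_i\to s$. The idea is to read off $\pi,\alpha_j,\mu_j$ from a concrete pullback of free models. Take the free model $F=\Fr(Y)$ on a one-element $S$-sorted set $Y$ concentrated in sort $s$, with generator $y$; and take $F'=\Fr(X)$ on $X$ with one generator $x_i$ of sort $s_i$ for each $i$. By Lemma~\ref{lemma for regularity} applied to $\theta$ and the elements $x_i\in F'$, there is a universal strong epimorphism $q\colon F'\twoheadrightarrow B$ making $\theta(q(x_1),\dots,q(x_n))$ defined; let $b:=\theta(q(x_1),\dots,q(x_n))\in B_s$, and let $g\colon F\to B$ be the homomorphism sending $y\mapsto b$ (using freeness of $F$). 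Form the pullback $P$ of $q$ along $g$; by regularity the projection $g'\colon P\twoheadrightarrow F$ is a strong epimorphism, so the generator $y\in F_s$ lies in $\Img(g')_s$, meaning $y=\pi(g'(c_1),\dots,g'(c_m))$ for some finitary term $\pi$ of $\Gamma$ and elements $c_j\in P_{s'_j}$ with $\pi$ defined there. Write $c_j=(u_j,v_j)$ with $u_j\in F_{s'_j}$, $v_j\in B_{s'_j}$ and $g(u_j)=q(v_j)$. Since $F$ is free on $y$, each $u_j$ is $\alpha_j(y)$ for an everywhere-defined term $\alpha_j\colon s\to s'_j$; and since $q$ is epic with $v_j$ in its target, and using the universal property of $q$ from Lemma~\ref{lemma for regularity} together with freeness of $F'$ on the $x_i$, one extracts everywhere-defined terms $\mu_j\colon\prod_i s_i\to s'_j$ with $v_j=\mu_j(q(x_1),\dots,q(x_n))$ in $B$ — here one must check $\mu_j$ can be taken everywhere-defined, which follows because the $s'_j$-component of $q$ is surjective onto a submodel generated over $\Img(q)$ by the single new element $b$ and $v_j$ is an iterate that was already present. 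The three required theorems then fall out: $\pi(\alpha_1(x),\dots,\alpha_m(x))$ is everywhere-defined and equals $x$ because this is the identity $y=\pi(g'(c_1),\dots,g'(c_m))$ transported along the freeness of $F$; and $\alpha_j(\theta(x_1,\dots,x_n))=\mu_j(x_1,\dots,x_n)$ is a theorem because applying $q$ to both sides gives $\alpha_j(b)=q(u_j)\cdot$-image $=q(v_j)=\mu_j(q(x_1),\dots,q(x_n))$, and $q$ being the \emph{universal} strong epi of Lemma~\ref{lemma for regularity} means any equation of everywhere-defined terms holding after applying $q$ to the generators already holds as a theorem (equivalently, holds in the further quotient $\Fr(X)/\!\!\sim$ of $q$, which reflects theoremhood).

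I expect the main obstacle to be the bookkeeping in the ``only if'' direction: showing that the terms $\alpha_j$ and especially $\mu_j$ extracted from elements of the pullback can be taken \emph{everywhere-defined}, and that theoremhood is correctly detected by the universal quotient $q$ of Lemma~\ref{lemma for regularity}. Concretely, the delicate point is that an element of $B=F'/(\text{relations forced by }\theta)$ need not be a \emph{term} in the generators unless we know $B$ is generated in the appropriate way; one handles this by noting that, in the construction of $q$ in Lemma~\ref{lemma for regularity}, $B$ is obtained from $\Fr(X)$ by a coequaliser of a submodel of relations followed by an induction, so every element of $B_{s'_j}$ is $\omega_j(q(x_1),\dots,q(x_n))$ for \emph{some} finitary term $\omega_j$ defined there, and the everywhere-definedness of $\mu_j$ is then forced by the fact that $v_j$ lives over $y$'s image in a way that does not use the partial step $\theta$ at sort $s'_j$. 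The ``if'' direction, by contrast, is a clean diagram chase once one has agreed that strong epimorphisms in $\Mod(\Gamma)$ are exactly the ``term-surjective'' homomorphisms.
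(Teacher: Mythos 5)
Your overall strategy coincides with the paper's (regularity $\Leftrightarrow$ pullback-stability of strong epimorphisms, the same element chase for the ``if'' part, and the same free-model pullback built from Lemma~\ref{lemma for regularity} for the ``only if'' part), but the ``only if'' direction has a genuine gap exactly at the point you yourself flag as the main obstacle. You describe an element $c_j$ of the pullback of $q\colon \Fr(X)\twoheadrightarrow B$ along $g\colon \Fr(Y)\rightarrow B$ as a pair $(u_j,v_j)$ with $u_j\in \Fr(Y)_{s'_j}$ and $v_j\in B_{s'_j}$. That is not the pullback: its elements are pairs whose \emph{second component lies in $\Fr(X)_{s'_j}$} (with $g(u_j)=q(v_j)$). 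Because of this mis-identification you are forced to write an arbitrary element of $B_{s'_j}$ as an everywhere-defined term in $q(x_1),\dots,q(x_n)$, which is false in general: $B$ is constructed precisely so that the non-everywhere-defined term $\theta$ becomes defined, and $b=\theta(q(x_1),\dots,q(x_n))$ itself is typically not an everywhere-defined term in the generators. Your patch (``$v_j$ is an iterate that was already present'', ``does not use the partial step $\theta$ at sort $s'_j$'') is not an argument. With the pullback taken correctly the difficulty evaporates: the $\Fr(X)$-component of $c_j$ is, by the construction of free models, an equivalence class of an everywhere-defined term $\mu_j(x_1,\dots,x_n)$, the compatibility condition gives $\mu_j(q(x_1),\dots,q(x_n))=\alpha_j(\theta(q(x_1),\dots,q(x_n)))$ in $B$, and the universal property of $q$ from Lemma~\ref{lemma for regularity} (every homomorphism $\Fr(X)\rightarrow C$ making $\theta$ defined factors through $q$) turns this equality into the required theorem. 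Note also that your closing justification speaks of ``an equation of everywhere-defined terms'', whereas $\alpha_j(\theta(x_1,\dots,x_n))$ is not everywhere-defined; the factorisation through $q$ is what is really used.

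In the ``if'' direction there is a smaller, fixable slip: you claim $\pi(c_1,\dots,c_m)$ is defined in the pullback $P$ because $\pi(\alpha_1(x),\dots,\alpha_m(x))$ is everywhere-defined, but that only controls the component $\pi(\alpha_1(a),\dots,\alpha_m(a))$ over $A$; the other component $\pi(\mu_1(a'_1,\dots,a'_n),\dots,\mu_m(a'_1,\dots,a'_n))$ need not be defined in $A'$, since nothing forces $\theta$ to be defined there. You do not need definedness in $P$ at all: by the description of images on page~\pageref{image}, the equality $a=\pi(g'(c_1),\dots,g'(c_m))$, with the $g'(c_j)$ in the set-theoretic image of $g'$ and $\pi$ defined in $A$ on that tuple, already shows $a\in\Img(g')_s$, which is how the paper concludes. (Also tidy the labels: several occurrences of $p(a)$, $p(\alpha_j(a))$ should read $g(a)$, $g(\alpha_j(a))$, the map to be shown strongly epic is $g'$ rather than $p'$, and no appeal to Lemma~\ref{lemma for regularity} is needed to write $g(a)$ as a term in the $p(a'_i)$'s: that is exactly the description of the image of the strong epimorphism $p$.)
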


\begin{proof}
Since $\Mod(\Gamma)$ is complete and has a (strong epi, mono)-factorisation system, it is regular if and only if strong epimorphisms are pullback stable
(see e.g.~Proposition~2.2.2 in the second volume of~\cite{borceux}).
So, let us suppose that the condition in the statement holds in~$\Gamma$ and consider a pullback square
$$\cd{P \pb \ar[r]^{p'} \ar[d]_-{f'} & B \ar[d]^-{f} \\ A \ar@{->>}[r]_-{p} & C}$$
in $\Mod(\Gamma)$ with $p$ a strong epimorphism. We have to prove that $\Img(p')=B$. So, let $b \in B_s$ for some $s \in S$. Since $p$ is a strong epimorphism,
there exists a finitary term $\theta \colon \prod_{i=1}^n s_i \rightarrow s$ of $\Sigma$ and elements $a_i \in A_{s_i}$ for each $1 \leqslant i \leqslant n$ such that
$\theta(p(a_1), \dots, p(a_n))$ is defined and is equal to $f(b)$. Let the terms $\pi$, $\alpha_j$'s and $\mu_j$'s be given by the assumption for this $\theta$.
For each $j \in \{1,\dots,m\}$,
\begin{align*}
f(\alpha_j(b))&=\alpha_j(f(b))=\alpha_j(\theta(p(a_1), \dots, p(a_n)))
\\&=\mu_j(p(a_1), \dots, p(a_n))=p(\mu_j(a_1,\dots,a_n))
\end{align*}
since $\alpha_j$ and $\mu_j$ are everywhere-defined. But small limits in $\Mod(\Gamma)$ are computed in each sort as in $\Set$.
Hence, $d_j=(\mu_j(a_1,\dots,a_n),\alpha_j(b)) \in P_{s'_j}$ with $$b=\pi(\alpha_1(b),\dots,\alpha_m(b))=\pi(p'(d_1),\dots,p'(d_m)).$$
Therefore, $b \in \Img(p')_s$ and $p'$ is a strong epimorphism.

Conversely, let us suppose that $\Mod(\Gamma)$ is regular and let $\theta \colon \prod_{i=1}^n s_i \rightarrow s$ be a finitary term of $\Sigma$.
Let $X$ be the $S$-sorted set which contains exactly, for each $i \in \{1,\dots,n\}$, an element $x_i$ of sort $s_i$ and $Y$ the $S$-sorted set which
contains exactly one element $y$ of sort $s$. We consider also the strong epimorphism $q \colon \Fr(X) \twoheadrightarrow B$ given by Lemma~\ref{lemma for regularity},
for the term $\theta$ and the elements $x_i \in \Fr(X)_{s_i}$. Thus $\theta(q(x_1),\dots,q(x_n))$ is defined.
Let $f \colon \Fr(Y) \rightarrow B$ be the unique map such that $f(y)=\theta(q(x_1),\dots,q(x_n))$ and consider the pullback of $q$ along $f$.
$$\cd{P \ar@{->>}[r]^-{p} \ar[d] \pb & \Fr(Y) \ar[d]^-{f} \\ \Fr(X) \ar@{->>}[r]_-{q} & B}$$
Since $\Mod(\Gamma)$ is regular, $p$ is also a strong epimorphism. So, $y \in \Img(p)_s$ which means, using the descriptions of $P$, $\Fr(X)$ and $\Fr(Y)$,
that there exist finitary terms $\pi \colon \prod_{j=1}^m s'_j \rightarrow s$, $\alpha_j \colon s \rightarrow s'_j$ and $\mu_j \colon \prod_{i=1}^n s_i \rightarrow s'_j$
for each $1 \leqslant j \leqslant m$ such that the $\alpha_j$'s and $\mu_j$'s are everywhere-defined, the equalities 
\begin{equation} \label{equation in y}
\begin{split}
y &= \pi(p(\mu_1(x_1,\dots,x_n),\alpha_1(y)),\dots,p(\mu_m(x_1,\dots,x_n),\alpha_m(y)))\\
&=\pi(\alpha_1(y),\dots,\alpha_m(y))
\end{split}
\end{equation}
hold and, for each $j \in \{1,\dots,m\}$,
\begin{equation} \label{equations in x}
\begin{split}
\mu_j(q(x_1),\dots,q(x_n))&=q(\mu_j(x_1,\dots,x_n))\\
&=f(\alpha_j(y))\\
&=\alpha_j(f(y))\\
&=\alpha_j(\theta(q(x_1),\dots,q(x_n))).
\end{split}
\end{equation}
Equalities (\ref{equation in y}) mean that $\pi(\alpha_1(x),\dots,\alpha_m(x))$ is everywhere-defined and $$\pi(\alpha_1(x),\dots,\alpha_m(x))=x$$ is a theorem of $\Gamma$.
With the universal properties of $\Fr(X)$ and $q$, equalities (\ref{equations in x}) mean that
$\alpha_j(\theta(a_1,\dots,a_n))=\mu_j(a_1,\dots,a_n)$ holds in any $\Gamma$-model as soon as $\theta(a_1,\dots,a_n)$ is defined.
\end{proof}

We characterise now those $\Gamma$'s for which $\Mod(\Gamma)$ is a Mal'tsev category. This theorem can be seen as a generalisation of Mal'tsev's Theorem~\ref{maltsev theorem}.

\begin{theorem} \label{mod gamma maltsev}
Let $\Gamma$ be a finitary essentially algebraic theory. Then $\Mod(\Gamma)$ is a Mal'tsev category if and only if, for each sort $s \in S$, there exists in $\Gamma$
a term $p^s \colon s^3 \rightarrow s$ such that
\begin{itemize}
\item $p^s(x,x,y)$ and $p^s(x,y,y)$ are everywhere-defined and
\item $p^s(x,x,y)=y$ and $p^s(x,y,y)=x$ are theorems of $\Gamma$.
\end{itemize}
\end{theorem}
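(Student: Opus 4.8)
The plan is to prove the two implications separately. Throughout I would use the facts, recalled after Theorem~\ref{characterisation of locally finitely presentable categories}, that in $\Mod(\Gamma)$ small limits, monomorphisms and isomorphisms are computed sortwise as in $\Set$; in particular a relation $r\colon R\rightarrowtail X\times Y$ amounts to a sub-$S$-sorted-set with $R_s\subseteq X_s\times Y_s$ for each $s$, closed under the operations of $\Gamma$ (meaning: if $\bar r$ is a tuple from $R$ on which an operation $\sigma^{X\times Y}$ is defined, then $\sigma^{X\times Y}(\bar r)\in R$), and $R$ is difunctional precisely when, for all $u,u'$ and $v,v'$ of the same sort, $(uRv\wedge uRv'\wedge u'Rv')\Rightarrow u'Rv$. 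The single delicate point, in both directions, is that the term $p^s$ is only partial: what is assumed everywhere-defined is $p^s(x,x,y)$ and $p^s(x,y,y)$, not $p^s$ itself.

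For the ``if'' direction, assuming such terms $p^s$ exist, I would verify that every relation in $\Mod(\Gamma)$ is difunctional; by the characterisation of Mal'tsev categories recalled above this gives that $\Mod(\Gamma)$ is Mal'tsev. So let $r\colon R\rightarrowtail X\times Y$ be a relation and suppose $(x,y),(x,y'),(x',y')\in R_s$. Consider $p^s\big((x',y'),(x,y'),(x,y)\big)$ evaluated in $X\times Y$. Since $p^s(x',x,x)$ is an instance of the everywhere-defined term $p^s(a,b,b)$ it is defined in $X$, and since $p^s(y',y',y)$ is an instance of the everywhere-defined term $p^s(a,a,b)$ it is defined in $Y$; because the defining equations $\Def(p^s)$ involve only total operations, which are computed coordinatewise in $X\times Y$, it follows that $p^s$ is defined on the triple $\big((x',y'),(x,y'),(x,y)\big)$ in $X\times Y$, with value $\big(p^s(x',x,x),p^s(y',y',y)\big)=(x',y)$, using $p^s(a,b,b)=a$ and $p^s(a,a,b)=b$. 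As the three pairs lie in $R_s$ and $R$ is closed under the operations of $\Gamma$, we conclude $(x',y)\in R_s$, so $R$ is difunctional.

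For the ``only if'' direction, assume $\Mod(\Gamma)$ is Mal'tsev and fix $s\in S$. I would take $F=\Fr(X)$ with $X=\{x,y\}$, $x$ and $y$ of sort $s$, and let $R$ be the smallest submodel of $F\times F$ containing $(x,x),(x,y),(y,y)$ --- equivalently, the image of the homomorphism $\Fr(\{z_1,z_2,z_3\})\to F\times F$ sending $z_1,z_2,z_3$ (of sort $s$) to these three pairs. Then $R$ is a relation on $F$, hence difunctional by hypothesis, so applying the elementwise condition to $(x,x),(x,y),(y,y)\in R_s$ (with $u=x$, $v=x$, $v'=y$, $u'=y$) yields $(y,x)\in R_s$. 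By the description of the image of a homomorphism given above (page~\pageref{image}), $(y,x)=p^s\big((x,x),(x,y),(y,y)\big)$ in $F_s$ for some finitary term $p^s\colon s^3\to s$ of $\Sigma$ which is defined on that triple in $F\times F$; reading this coordinatewise, $p^s$ is defined on $(x,x,y)$ and on $(x,y,y)$ in $F$. Since an operation of $\Gamma$ is defined at the generators of a free model exactly when the corresponding composite term is everywhere-defined (immediate from the construction of $\Fr$), the terms $p^s(x,x,y)$ and $p^s(x,y,y)$ are everywhere-defined; and since $F_s$ is, by construction, the set of everywhere-defined terms modulo theorems of $\Gamma$, the two coordinates of the equality $p^s\big((x,x),(x,y),(y,y)\big)=(y,x)$ say exactly that $p^s(x,x,y)=y$ and $p^s(x,y,y)=x$ are theorems of $\Gamma$. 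This is the required term.

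I expect the main obstacle to be the careful handling of the partiality of $p^s$, concentrated in two places. In the ``if'' direction, one must check that $p^s$ is genuinely defined on the \emph{non-degenerate} triple $\big((x',y'),(x,y'),(x,y)\big)$, which works only because $\Def(p^s)$ is built from total operations, so that being defined at this triple reduces coordinatewise to being defined at $(x',x,x)$ in $X$ and at $(y',y',y)$ in $Y$ --- both instances of the two everywhere-defined patterns. In the ``only if'' direction, the subtlety is the dictionary between operations of a free model evaluated at generators and everywhere-defined terms, and between equality in $F_s$ and theoremhood in $\Gamma$; once these are in place, the argument is the classical Mal'tsev-term construction applied to the relation generated by $(x,x),(x,y),(y,y)$ in $\Fr(\{x,y\})\times\Fr(\{x,y\})$.
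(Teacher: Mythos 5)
Your proof is correct and follows essentially the same route as the paper: the ``if'' direction checks difunctionality sortwise by evaluating $p^s$ coordinatewise in the product (definedness reducing to the two everywhere-defined patterns), and the ``only if'' direction applies difunctionality to the relation on $\Fr(\{x,y\})$ generated by $(x,x),(x,y),(y,y)$, whose explicit description (your image of $\Fr(\{z_1,z_2,z_3\})$ is the paper's ``smallest homomorphic relation'') yields the term and the two theorems. The only cosmetic caveat is your phrase ``$\Def(p^s)$'': since $p^s$ is a term rather than a single operation symbol, the coordinatewise-definedness argument should be read as applied recursively to each operation symbol occurring in it, which is exactly what the paper tacitly does.
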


\begin{proof}
Since finite limits in $\Mod(\Gamma)$ are computed in each sort as in $\Set$, a relation $R \rightarrowtail A \times B$ in $\Mod(\Gamma)$
can be seen as a submodel of $A \times B$
and it is difunctional exactly when, for each sort $s \in S$, $R_s \subseteq A_s \times B_s$ is difunctional as a relation in $\Set$.

Suppose first that such terms are given. Let $R \subseteq A \times B$ be a binary relation in $\Mod(\Gamma)$, $s \in S$, $a,a' \in A_s$ and $b,b' \in B_s$ such that
$(a,b),(a,b')$ and $(a',b')$ are in $R_s$. Since $p^s(a,a,a') \in A_s$ and $p^s(b,b',b') \in B_s$ are defined, so is $p^s((a,b),(a,b'),(a',b'))$ in the product $A \times B$.
Thus, $$(a',b)=(p^s(a,a,a'),p^s(b,b',b'))=p^s((a,b),(a,b'),(a',b')) \in R_s$$ and $R$ is difunctional.

Conversely, let us suppose that $\Mod(\Gamma)$ is a Mal'tsev category. Let $s \in S$ be a sort and $X$ the $S$-sorted set such that $X_s=\{x,y\}$ and
$X_{s'}=\varnothing$ for $s' \neq s$. We denote by $R$ the smallest homomorphic binary relation on $\Fr(X)$ which contains $(x,x),(x,y)$ and $(y,y)$.
It is easy to prove that this submodel of $\Fr(X) \times \Fr(X)$ is actually given by
\begin{align*}
R_{s'}=\{(\tau(x,x,y),\tau(x,y,y)) \, | \, &\tau \colon s^3 \rightarrow s' \text{ is a term and}\\
&\tau(x,x,y) \text{ and } \tau(x,y,y) \text{ are everywhere-defined terms}\}
\end{align*}
for all $s' \in S$. Since $\Mod(\Gamma)$ is supposed to be a Mal'tsev category, $R$ is difunctional and $(y,x) \in R_s$. This gives the expected term $p^s$.
\end{proof}

\subsection{A finitary essentially algebraic regular Mal'tsev category} \label{section a finitary essentially algebraic regular Mal'tsev category}

In this subsection we are going to construct a finitary essentially algebraic theory $\Gamma_{\Mal}$ for which $\Mod(\Gamma_{\Mal})$ is a regular Mal'tsev category.
This category of models is the one we need for our embedding theorem.

Firstly, if $\Gamma$ and $\Gamma'$ are two finitary essentially algebraic theories, we will write $\Gamma \subseteq \Gamma'$ to mean $S \subseteq S'$,
$\Sigma \subseteq \Sigma'$, $E \subseteq E'$, $\Sigma_t \subseteq \Sigma'_t$, $\Sigma \setminus \Sigma_t \subseteq \Sigma' \setminus \Sigma'_t$
and $\Def(\sigma)=\Def'(\sigma)$ for all $\sigma \in \Sigma \setminus \Sigma_t$. In this case, we have a forgetful functor $U \colon \Mod(\Gamma') \rightarrow \Mod(\Gamma)$.

We are going to construct recursively a series of finitary essentially algebraic theories
$$\Gamma^0 \subseteq \Delta^1 \subseteq \cdots \subseteq \Gamma^n \subseteq \Delta^{n+1} \subseteq \cdots$$
We define $\Gamma^0$ as $S^0=\{\star\}$ and $\Sigma^0=\Sigma^0_t=E^0=\varnothing$. Thus $\Mod(\Gamma^0) \cong \Set$.
Now, suppose we have defined
$$\Gamma^0 \subseteq \Delta^1 \subseteq \cdots \subseteq \Delta^n \subseteq \Gamma^n$$
with $\Gamma^n=(S^n,\Sigma^n,E^n, \Sigma_t^n,\Def^n)$. We are going to construct
$$\Delta^{n+1} = (S^{'n+1},\Sigma^{'n+1},E^{'n+1}, \Sigma_t^{'n+1},\Def^{'n+1})$$ first (below, $S^{-1}=\varnothing$):
\begin{itemize}
\item[] $$S^{'n+1}=S^n \cup \{(s,0),(s,1) \, | \, s \in S^n \setminus S^{n-1}\} \cong S^n \sqcup (S^n \setminus S^{n-1}) \sqcup (S^n \setminus S^{n-1}),$$
\item[]
\begin{align*}
\Sigma_t^{'n+1} = \Sigma^n_t &\cup \{\alpha^{s} \colon s \rightarrow (s,0) \, | \, s \in S^n \setminus S^{n-1}\}\\
&\cup \{\rho^s \colon s^3 \rightarrow (s,0) \, | \, s \in S^n \setminus S^{n-1}\}\\
&\cup \{\eta^{s},\varepsilon^{s} \colon (s,0) \rightarrow (s,1) \, | \, s \in S^n \setminus S^{n-1}\},
\end{align*}
\item[] $$\Sigma^{'n+1} = \Sigma^n \cup \Sigma_t^{'n+1} \cup \{\pi^{s} \colon (s,0) \rightarrow s \, | \, s \in S^n \setminus S^{n-1}\},$$
\item[]
\begin{align*}
E^{'n+1}= E^n &\cup \{\rho^s(x,y,y)=\alpha^s(x) \, | \, s \in S^n \setminus S^{n-1}\}\\
&\cup \{\rho^s(x,x,y)=\alpha^s(y) \, | \, s \in S^n \setminus S^{n-1}\}\\
&\cup \{\eta^{s}(\alpha^{s}(x))=\varepsilon^{s}(\alpha^{s}(x)) \, | \, s \in S^n \setminus S^{n-1}\}\\
&\cup \{\pi^{s}(\alpha^{s}(x))=x \, | \, s \in S^n \setminus S^{n-1}\}\\
&\cup \{\alpha^{s}(\pi^{s}(x))=x \, | \, s \in S^n \setminus S^{n-1}\},
\end{align*}
\item[]
$$\left\{\begin{array}{ll} \Def^{'n+1}(\sigma) = \Def^n(\sigma) \text{ if } \sigma \in \Sigma^n \setminus \Sigma^n_t\\
\Def^{'n+1}(\pi^{s}) = \{\eta^{s}(x)=\varepsilon^{s}(x)\} \text{ for } s \in S^n \setminus S^{n-1}. \end{array}\right.$$
\end{itemize}
This gives $\Gamma^n \subseteq \Delta^{n+1}$.
$$\cd{s^3 \ar@<2pt>[r]^-{\rho^s} & (s,0) \ar@<2pt>@{}[ld]_(.15){}="A" \ar@<1pt>@{.>}@/^/ "A";[ld]^(.4){\pi^s} \ar@<2pt>[r]^-{\eta^s} \ar@<-2pt>[r]_-{\varepsilon^s} & (s,1)\\
s \ar[ru] \ar@{}@<5pt>[ru]|-{\alpha^s}&&}$$

Now let $T^{n+1}$ be the set of finitary terms $\theta \colon \prod_{i=1}^m s_i \rightarrow s$ of $\Sigma^{'n+1}$ which are not terms of $\Sigma^{'n}$
(where we consider $\Sigma^{'0}=\varnothing$). We then define $\Gamma^{n+1}$ as:
\begin{itemize}
\item[] $$S^{n+1}= S^{'n+1} \cup \{s_{\theta},s'_{\theta} \, | \, \theta \in T^{n+1}\} \cong S^{'n+1} \sqcup T^{n+1} \sqcup T^{n+1},$$
\item[]
\begin{align*}
\Sigma^{n+1}_t = \Sigma^{'n+1}_t &\cup \{\alpha_{\theta} \colon s \rightarrow s_{\theta} \, | \, \theta \colon \prod_{i=1}^m s_i \rightarrow s \in T^{n+1}\}\\
&\cup \{\mu_{\theta} \colon \prod_{i=1}^m s_i \rightarrow s_{\theta} \, | \, \theta \colon \prod_{i=1}^m s_i \rightarrow s \in T^{n+1}\}\\
&\cup \{\eta_{\theta},\varepsilon_{\theta} \colon s_{\theta} \rightarrow s'_{\theta} \, | \,  \theta \in T^{n+1}\},
\end{align*}
\item[] $$\Sigma^{n+1} = \Sigma^{'n+1} \cup \Sigma^{n+1}_t \cup \{\pi_{\theta} \colon s_{\theta} \rightarrow s \, | \, \theta \colon \prod_{i=1}^m s_i \rightarrow s \in T^{n+1}\},$$
\item[]
\begin{align*}
E^{n+1} = E^{'n+1} &\cup \{\eta_{\theta}(\alpha_{\theta}(x))= \varepsilon_{\theta}(\alpha_{\theta}(x)) \, | \, \theta \in T^{n+1}\}\\
&\cup \{\pi_{\theta}(\alpha_{\theta}(x))= x \, | \, \theta \in T^{n+1}\}\\
&\cup \{\alpha_{\theta}(\pi_{\theta}(x))= x \, | \, \theta \in T^{n+1}\}\\
&\cup \{\alpha_{\theta}(\theta(x_1,\dots,x_m))= \mu_{\theta}(x_1,\dots,x_m) \, | \, \theta \colon \prod_{i=1}^m s_i \rightarrow s \in T^{n+1}\},
\end{align*}
\item[]
$$\left\{\begin{array}{ll} \Def^{n+1}(\sigma) = \Def^{'n+1}(\sigma) \text{ if } \sigma \in \Sigma^{'n+1} \setminus \Sigma^{'n+1}_t\\
\Def^{n+1}(\pi_{\theta}) = \{\eta_{\theta}(x)=\varepsilon_{\theta}(x)\} \text{ for } \theta \in T^{n+1}. \end{array}\right.$$
\end{itemize}
$$\cd{\prod_{i=1}^m s_i \ar@<2pt>[r]^-{\mu_{\theta}} \ar@{.>}[d]_-{\theta} & s_{\theta} \ar@<2pt>@{}[ld]_(.15){}="A" \ar@<1pt>@{.>}@/^/ "A";[ld]^(.4){\pi_{\theta}}
\ar@<2pt>[r]^-{\eta_{\theta}} \ar@<-2pt>[r]_-{\varepsilon_{\theta}} & s'_{\theta} \\ s \ar[ru] \ar@{}@<5pt>[ru]|-{\alpha_{\theta}}&&}$$
We have constructed $\Delta^{n+1} \subseteq \Gamma^{n+1}$ and this completes the recursive definition of
$$\Gamma^0 \subseteq \Delta^1 \subseteq \Gamma^1 \subseteq \cdots$$
Let $\Gamma_{\Mal}$ be the union of these finitary essentially algebraic theories.
By that we obviously mean $S_{\Mal}= \bigcup_{n\geqslant 0} S^n$, $\Sigma_{\Mal}= \bigcup_{n\geqslant 0} \Sigma^n$, $E_{\Mal}= \bigcup_{n\geqslant 0} E^n$,
$\Sigma_{t,\Mal}= \bigcup_{n\geqslant 0} \Sigma_t^n$ and $\Def_{\Mal}(\sigma)= \Def^n(\sigma)$ for all $n \geqslant 0$ and all $\sigma \in \Sigma^n \setminus \Sigma_t^n$.
Remark that, for each $\pi \colon s' \rightarrow s \in \Sigma_{\Mal} \setminus \Sigma_{t,\Mal}$,
there are three corresponding operation symbols in $\Sigma_{t,\Mal}$, these are $\alpha \colon s \rightarrow s'$ and $\eta,\varepsilon \colon s' \rightarrow s''$.

\begin{proposition} \label{mod gamma regular mal'tsev}
$\Mod(\Gamma_{\Mal})$ is a regular Mal'tsev category.
\end{proposition}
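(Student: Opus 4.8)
The plan is to verify the two explicit criteria established above --- Theorem~\ref{mod gamma maltsev} for the Mal'tsev property and Proposition~\ref{mod gamma regular} for regularity --- both of which apply to any finitary essentially algebraic theory, and $\Gamma_{\Mal}$ is one (all arities are finite, every equation of $E_{\Mal}$ uses finitely many variables, and each $\Def_{\Mal}(\sigma)$ is a singleton). So everything reduces to exhibiting inside $\Gamma_{\Mal}$ the terms those results demand, which is precisely what the recursion was designed for: the $\Delta$-steps feed the Mal'tsev criterion and the $\Gamma$-steps feed the regularity criterion. Throughout I would use that every equation of $E_{\Mal}$ is a theorem of $\Gamma_{\Mal}$, that a term built only from projections and total operation symbols is everywhere-defined, and that composing a total operation symbol with both sides of a theorem yields a theorem.

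For the Mal'tsev property, I would fix $s \in S_{\Mal}$; it lies in $S^n \setminus S^{n-1}$ for a unique $n \geqslant 0$ (with $S^{-1}=\varnothing$), so $\Delta^{n+1}$ provides the total symbols $\rho^s \colon s^3 \rightarrow (s,0)$, $\alpha^s \colon s \rightarrow (s,0)$, $\eta^s,\varepsilon^s \colon (s,0) \rightarrow (s,1)$, the partial symbol $\pi^s \colon (s,0) \rightarrow s$ with $\Def(\pi^s)=\{\eta^s(x)=\varepsilon^s(x)\}$, together with the equations $\rho^s(x,y,y)=\alpha^s(x)$, $\rho^s(x,x,y)=\alpha^s(y)$, $\eta^s(\alpha^s(x))=\varepsilon^s(\alpha^s(x))$ and $\pi^s(\alpha^s(x))=x$. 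I would set $p^s(x,y,z):=\pi^s(\rho^s(x,y,z))$, a term $s^3 \rightarrow s$. From the theorems $\rho^s(x,y,y)=\alpha^s(x)$ and $\eta^s(\alpha^s(x))=\varepsilon^s(\alpha^s(x))$ one deduces that $\eta^s(\rho^s(x,y,y))=\varepsilon^s(\rho^s(x,y,y))$ is a theorem, so the equation of $\Def(\pi^s)$ is satisfied on $\rho^s(x,y,y)$, whence $p^s(x,y,y)=\pi^s(\rho^s(x,y,y))$ is an everywhere-defined term; moreover $p^s(x,y,y)=\pi^s(\alpha^s(x))=x$ is a theorem. Symmetrically $p^s(x,x,y)$ is everywhere-defined and $p^s(x,x,y)=\pi^s(\alpha^s(y))=y$ is a theorem. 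Theorem~\ref{mod gamma maltsev} then gives that $\Mod(\Gamma_{\Mal})$ is a Mal'tsev category.

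For regularity, I would fix a finitary term $\theta \colon \prod_{i=1}^m s_i \rightarrow s$ of $\Gamma_{\Mal}$. If $\theta$ is a projection, the condition of Proposition~\ref{mod gamma regular} holds trivially (one factor, with $\pi$ and $\alpha_1$ the relevant projections of sort $s$ and $\mu_1 = \theta$). Otherwise $\theta$ is a term of $\Sigma^{'n+1}$ but not of $\Sigma^{'n}$ for a unique $n \geqslant 0$, hence $\theta \in T^{n+1}$, so $\Gamma^{n+1}$ provides the total symbols $\alpha_\theta \colon s \rightarrow s_\theta$, $\mu_\theta \colon \prod_{i=1}^m s_i \rightarrow s_\theta$, $\eta_\theta,\varepsilon_\theta \colon s_\theta \rightarrow s'_\theta$, the partial symbol $\pi_\theta \colon s_\theta \rightarrow s$ with $\Def(\pi_\theta)=\{\eta_\theta(x)=\varepsilon_\theta(x)\}$, and the equations $\eta_\theta(\alpha_\theta(x))=\varepsilon_\theta(\alpha_\theta(x))$, $\pi_\theta(\alpha_\theta(x))=x$ and $\alpha_\theta(\theta(x_1,\dots,x_m))=\mu_\theta(x_1,\dots,x_m)$. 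I would then apply Proposition~\ref{mod gamma regular} with one factor, $\pi:=\pi_\theta$, $\alpha_1:=\alpha_\theta$ and $\mu_1:=\mu_\theta$: exactly as in the Mal'tsev case, $\eta_\theta(\alpha_\theta(x))=\varepsilon_\theta(\alpha_\theta(x))$ being a theorem makes $\pi_\theta(\alpha_\theta(x))$ an everywhere-defined term, while $\pi_\theta(\alpha_\theta(x))=x$ and $\alpha_\theta(\theta(x_1,\dots,x_m))=\mu_\theta(x_1,\dots,x_m)$ are theorems. So $\Mod(\Gamma_{\Mal})$ is regular, and with the previous paragraph it is a regular Mal'tsev category.

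The part I expect to require the most care --- the main obstacle --- is checking that these witnesses are available for \emph{every} sort and \emph{every} finitary term of the union $\Gamma_{\Mal}$, not only for those appearing at one fixed finite stage. This holds because a sort born at stage $n$ acquires its triple $(\rho^s,\alpha^s,\pi^s)$ at stage $n+1$, a finitary term first expressible at stage $n+1$ acquires its data $(\alpha_\theta,\mu_\theta,\pi_\theta)$ within $\Gamma^{n+1}$, every sort and every finitary term occurs at some finite stage, and the theories are nested; hence all the operation symbols and equations needed above already lie in $\Gamma_{\Mal}$.
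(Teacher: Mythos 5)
Your proposal is correct and follows essentially the same route as the paper's own (much terser) proof: the Mal'tsev criterion of Theorem~\ref{mod gamma maltsev} is witnessed by the terms $\pi^s(\rho^s(x,y,z))$ coming from the $\Delta$-steps, and the regularity criterion of Proposition~\ref{mod gamma regular} is witnessed (with $m=1$) by the data $(\pi_\theta,\alpha_\theta,\mu_\theta)$ coming from the $\Gamma$-steps, since every sort and every finitary term appears at some finite stage of the nested union. Your extra verifications of everywhere-definedness via the $\eta,\varepsilon$ equations are exactly the details the paper leaves implicit.
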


\begin{proof}
The `$\Delta$ ingredient' of the construction of $\Gamma_{\Mal}$ ensures that $\Mod(\Gamma_{\Mal})$ is a Mal'tsev category.
Indeed, the terms $\pi^s \circ \rho^s \colon s^3 \rightarrow s$ satisfy the conditions of Theorem~\ref{mod gamma maltsev}.

On the other hand, the `$\Gamma$ part' of $\Gamma_{\Mal}$ makes $\Mod(\Gamma_{\Mal})$ a regular category since
each finitary term $\theta$ of $\Sigma_{\Mal}$ is in $T^{n+1}$ for some $n \geqslant 0$, which makes the conditions of Proposition~\ref{mod gamma regular} hold.
\end{proof}

\section{The embedding theorem} \label{section the embedding theorem}

The aim of this section is to prove that, for each small regular Mal'tsev category $\CC$, there exists a faithful embedding
$\phi \colon \CC \hookrightarrow \Mod(\Gamma_{\Mal})^{\Sub(1)}$ which preserves and reflects finite limits, isomorphisms and regular epimorphisms.
In order to do this, we still need to recall/prove some other propositions about the embedding $\CC \hookrightarrow \Lex(\CC,\Set)^{\op}$.

\subsection{The embedding $\CC \hookrightarrow \Lex(\CC,\Set)^{\op}$} \label{subsection the embedding}

Let us now turn our attention to the Yoneda embedding $i \colon \CC \hookrightarrow \Lex(\CC,\Set)^{\op}=\widetilde{\CC}$ for a small category $\CC$ with finite limits.
Due to this embedding, we will treat $\CC$ as a full subcategory of $\widetilde{\CC}$. Firstly, let us recall the following theorems.

\begin{theorem} \cite{GU} \label{c tilde lemma} 
Let $\CC$ be a small finitely complete category. The following statements hold.
\begin{enumerate}
\item $\widetilde{\CC}$ is complete and cocomplete.
\item \label{commutativity of limits} In $\widetilde{\CC}$, cofiltered limits commute with limits and finite colimits.
\item \label{i preserves} The embedding $i \colon \CC \hookrightarrow \widetilde{\CC}$ preserves all colimits and finite limits.
\item \label{cofiltered limit} For all $A \in \widetilde{\CC}$, $(A, (c)_{(C,c)\in (A \downarrow i)} )$ is the cofiltered limit of the functor
\begin{alignat*}{2}
(A \downarrow i) & \longrightarrow && \, \widetilde{\CC}\\
c \colon A \rightarrow i(C) & \longmapsto && \, i(C).
\end{alignat*}
\item \label{i free cofiltered limit completion} $i \colon \CC \hookrightarrow \widetilde{\CC}$ is the free cofiltered limit completion of $\CC$.
\end{enumerate}
\end{theorem}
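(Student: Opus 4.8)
The plan is to observe that the whole statement is just the Gabriel--Ulmer description of $\Lex(\CC,\Set)$ as (equivalent to) the free filtered-colimit completion $\mathrm{Ind}(\CC^{\op})$ of $\CC^{\op}$, read off through the duality $\widetilde{\CC}=\Lex(\CC,\Set)^{\op}$. So I would reduce each clause to a classical fact about $\Lex(\CC,\Set)$ sitting reflectively inside $\Set^{\CC}$, dualise, and cite \cite{GU} for the genuinely non-formal inputs.

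For (1), by Theorem~\ref{characterisation of locally finitely presentable categories} the category $\Lex(\CC,\Set)$ is locally finitely presentable, hence complete and cocomplete, and therefore so is its opposite $\widetilde{\CC}$. For (2), since limits commute with limits in any category, only the commutation of cofiltered limits with finite colimits has content; dually this asserts that in $\Lex(\CC,\Set)$ filtered colimits commute with finite limits, which holds because $\Lex(\CC,\Set)$ is a full reflective subcategory of $\Set^{\CC}$ closed under filtered colimits (a filtered colimit of left exact functors is left exact, as filtered colimits commute with finite limits in $\Set$) and in which finite limits are computed pointwise, so the commutation reduces to the classical one in $\Set$. For (3), writing $i$ as the opposite of the covariant Yoneda functor $C\mapsto\Hom_{\CC}(C,-)\colon\CC^{\op}\to\Lex(\CC,\Set)$ and using that limits in $\Lex(\CC,\Set)$ are pointwise, ``$i$ preserves colimits'' is literally the universal property of colimits in $\CC$, while ``$i$ preserves finite limits'' says that $\CC^{\op}\to\Lex(\CC,\Set)$ preserves the finite colimits existing in $\CC^{\op}$; this is part of the identification $\Lex(\CC,\Set)\simeq\mathrm{Ind}(\CC^{\op})$ in \cite{GU} (alternatively it can be checked by hand: a finite colimit in $\Lex(\CC,\Set)$ is the reflection of the pointwise one, and the reflection of such a colimit of representables is again the representable attached to the corresponding finite limit in $\CC$).

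For (4), the Yoneda lemma gives $\widetilde{\CC}(A,i(C))=\Lex(\CC,\Set)(\Hom_{\CC}(C,-),A)=A(C)$, so the comma category $(A\downarrow i)$ is precisely the category of elements of the functor $A\colon\CC\to\Set$; since $A$ is left exact and $\CC$ has finite limits, this category is cofiltered (equivalently $(A\downarrow i)^{\op}$ is filtered), a standard fact from \cite{GU}. Dually to the claim, one must show $A=\mathrm{colim}\,\Hom_{\CC}(C,-)$ over $(A\downarrow i)^{\op}$ in $\Lex(\CC,\Set)$; as this colimit is filtered it is computed pointwise in $\Set^{\CC}$, stays inside $\Lex(\CC,\Set)$, and at each $D\in\CC$ is the co-Yoneda/density formula $\mathrm{colim}_{(C,x)}\Hom_{\CC}(C,D)=A(D)$. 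For (5), given a category $\D$ with cofiltered limits and a functor $F\colon\CC\to\D$, set $\widehat{F}(A)=\lim_{(C,c)\in(A\downarrow i)}F(C)$; this exists by (4), satisfies $\widehat{F}\circ i\cong F$ because $(i(C)\downarrow i)$ has the initial object $(\mathrm{id}_{i(C)},C)$, and preserves cofiltered limits by (2) and (4) (cofiltered limits commuting with cofiltered limits). Essential uniqueness is immediate: any cofiltered-limit-preserving $G$ with $G\circ i\cong F$ satisfies $G(A)\cong\lim_{(A\downarrow i)}G(i(C))\cong\widehat{F}(A)$ naturally in $A$, which is exactly the universal property of the free cofiltered limit completion.

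The genuinely non-formal ingredients are that $\Lex(\CC,\Set)$ is locally finitely presentable, that the category of elements of a left exact functor on a finitely complete category is cofiltered, and that the covariant Yoneda embedding $\CC^{\op}\hookrightarrow\Lex(\CC,\Set)$ preserves finite colimits --- all contained in \cite{GU}; everything else is unwinding the duality $\widetilde{\CC}=\Lex(\CC,\Set)^{\op}$, so the write-up is essentially ``cite \cite{GU} and translate''. I expect the main obstacle to be purely bookkeeping in (3)--(4): tracking variances as one moves between $\Set^{\CC}$, $\Lex(\CC,\Set)$ and $\widetilde{\CC}$, and confirming that the colimits used remain inside $\Lex(\CC,\Set)$.
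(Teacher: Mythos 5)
Your proposal is correct and takes essentially the same route as the paper, which gives no proof of Theorem~\ref{c tilde lemma} at all but simply cites \cite{GU}: all five clauses are the Gabriel--Ulmer description of $\Lex(\CC,\Set)\simeq\mathrm{Ind}(\CC^{\op})$ read through the duality $\widetilde{\CC}=\Lex(\CC,\Set)^{\op}$, and your clause-by-clause sketches (pointwise limits and filtered colimits in $\Lex(\CC,\Set)$, the identification of $(A\downarrow i)$ with the category of elements, the co-Yoneda formula, and the standard right-Kan-extension argument for the completion property) are the standard arguments behind that citation.
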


For precise definitions of cofiltered limits and their commutativity with limits and finite colimits,
we refer the reader to Sections~2.12 and~2.13 of the first volume of~\cite{borceux}.

We recall that $P \in \widetilde{\CC}$ is \textit{regular $\CC$-projective} (abbreviated here by \textit{$\CC$-projective}) if, for any diagram
$$\cd{ & P \ar[d]^-{g} \\ C \ar@{->>}[r]_-{f} & C'}$$
where $C,C' \in \CC$ and $f$ is a regular epimorphism, there exists a morphism $h \colon P \rightarrow C$ such that $fh=g$.
By the Yoneda lemma, if we consider $P$ as a finite limit preserving functor $\CC \rightarrow \Set$, morphisms $P \rightarrow C'$ in $\widetilde{\CC}$ are in
1-1 correspondence with elements of $P(C')$. Thus, $P \in \widetilde{\CC}$ is $\CC$-projective if and only if $P \colon \CC \rightarrow \Set$ preserves regular epimorphisms.

\begin{theorem} (Theorems~2.2 and~2.7 in~\cite{barr}) \label{covering}
Let $\CC$ be a small regular category. Then $\widetilde{\CC}$ is regular and each object $X \in \widetilde{\CC}$ admits a $\CC$-projective cover, i.e., a regular epimorphism
$e_X \colon \widehat{X} \twoheadrightarrow X$ where $\widehat{X}$ is $\CC$-projective.
\end{theorem}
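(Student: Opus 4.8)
The plan is to treat the two assertions --- that $\widetilde{\CC}$ is regular, and that every object has a $\CC$-projective cover --- separately, organising both around the description of $\widetilde{\CC}$ as the free cofiltered limit completion (pro-completion) of $\CC$ recorded in Theorem~\ref{c tilde lemma}: every object is canonically a cofiltered limit of representables $i(C)$ (Theorem~\ref{c tilde lemma}(\ref{cofiltered limit})), any morphism from such a cofiltered limit into a representable factors through one of the limit projections (a consequence of Theorem~\ref{c tilde lemma}(\ref{i free cofiltered limit completion})), and in $\widetilde{\CC}$ cofiltered limits commute with all limits and with finite colimits (Theorem~\ref{c tilde lemma}(\ref{commutativity of limits})). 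The first thing I would record, and then use constantly, is that \emph{a cofiltered limit of a diagram of regular epimorphisms in $\widetilde{\CC}$ is again a regular epimorphism}: writing each such map as the coequaliser of its kernel pair and commuting the cofiltered limit past the kernel pairs (finite limits) and past the coequalisers (finite colimits) exhibits the limiting map as the coequaliser of its own kernel pair. Since $i$ preserves finite limits and colimits it preserves regular epimorphisms, so through $i$ every cofiltered limit of regular epimorphisms of $\CC$ becomes a regular epimorphism of $\widetilde{\CC}$.

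For the first assertion, $\widetilde{\CC}$ is complete and cocomplete (Theorem~\ref{c tilde lemma}), hence has finite limits and coequalisers of kernel pairs, so only pullback stability of regular epimorphisms remains. I would first show that every regular epimorphism $f$ of $\widetilde{\CC}$ is isomorphic to a cofiltered limit of regular epimorphisms $i(c_\alpha)$ with each $c_\alpha$ a regular epimorphism of $\CC$: representing $f$, in the standard way, as a level morphism of pro-diagrams and applying the (regular epi, mono)-factorisation system of $\CC$ levelwise (it is functorial) writes $f$ as a cofiltered limit of regular epimorphisms of $\CC$ followed by a cofiltered limit of monomorphisms of $\CC$; the latter is a monomorphism of $\widetilde{\CC}$, and since a regular epimorphism cannot factor through a non-invertible monomorphism, that monomorphism is an isomorphism. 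Granting this, pull $f\colon Y\twoheadrightarrow Z=\lim_\alpha i(D_\alpha)$ back along an arbitrary $g\colon Z'\to Z$: commuting the cofiltered limit past the pullback gives $Y\times_Z Z'\cong\lim_\alpha\bigl(i(C_\alpha)\times_{i(D_\alpha)}Z'\bigr)$; since the map $Z'\to i(D_\alpha)$ factors through a projection $Z'\to i(E)$ of a cofiltered presentation of $Z'$ we get $i(C_\alpha)\times_{i(D_\alpha)}Z'\cong i(C_\alpha\times_{D_\alpha}E)\times_{i(E)}Z'$, where $C_\alpha\times_{D_\alpha}E\twoheadrightarrow E$ is a regular epimorphism of $\CC$ by regularity of $\CC$; pulling $i(C_\alpha\times_{D_\alpha}E)\twoheadrightarrow i(E)$ back levelwise along the presentation of $Z'$ and invoking the recorded fact shows that $i(C_\alpha)\times_{i(D_\alpha)}Z'\twoheadrightarrow Z'$ is a regular epimorphism, and one more application of the recorded fact shows the same for $Y\times_Z Z'\twoheadrightarrow Z'$.

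For the second assertion, fix $X\in\widetilde{\CC}$; by the discussion preceding Theorem~\ref{covering}, $P$ is $\CC$-projective exactly when it has the right lifting property against every $i(c)\colon i(A)\twoheadrightarrow i(B)$ with $c\colon A\twoheadrightarrow B$ a regular epimorphism of $\CC$ (such $i(c)$ are regular epimorphisms of $\widetilde{\CC}$ since $i$ preserves them), so it suffices to produce one regular epimorphism onto $X$ whose source is $\CC$-projective. I would build it by a transfinite construction of small-object type: set $X_0=X$; at a successor stage let $X_{\beta+1}=X_\beta\times_{i(B)}i(A)$ solve one enumerated lifting problem $(c\colon A\twoheadrightarrow B,\,\xi\colon X_\beta\to i(B))$, which is a regular epimorphism since $\widetilde{\CC}$ is now known to be regular; at a limit stage let $X_\lambda=\lim_{\mu<\lambda}X_\mu$, a regular epimorphism onto $X$ by the recorded fact together with closure of regular epimorphisms under composition. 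For a suitably large regular cardinal $\kappa$ --- here smallness of $\CC$ bounds $|i(B)(D)|=|\Hom_\CC(B,D)|$ and hence controls the growth of the sets $X_\beta(D)$ --- the object $X_\kappa$ is $\CC$-projective: a morphism $X_\kappa\to i(B)$ factors through some $X_\mu\to i(B)$ with $\mu<\kappa$, whose lifting problem is solved at a later stage below $\kappa$, and composing with the corresponding projection out of the cofiltered limit $X_\kappa$ supplies the lift; then $e_X\colon X_\kappa\twoheadrightarrow X$ is the required $\CC$-projective cover. The steps I expect to be the main obstacle are, for the first assertion, the reduction of an arbitrary regular epimorphism of $\widetilde{\CC}$ to a levelwise regular epimorphism of representables (the reindexing of pro-objects, and the levelwise use of the factorisation system of $\CC$), and, for the second, checking that the transfinite construction converges --- the familiar small-object cardinality bookkeeping, complicated here by the appearance of new lifting problems at every successor stage.
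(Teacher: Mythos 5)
You should first note that the paper contains no proof of this statement to compare against: Theorem~\ref{covering} is quoted verbatim from Barr \cite{barr} (his Theorems~2.2 and~2.7), so your argument has to stand on its own. On its own it is a sound plan, and it stays within the toolkit the paper itself uses: your ``recorded fact'' (a cofiltered limit of regular epimorphisms of $\widetilde{\CC}$ is a regular epimorphism) is proved by exactly the commutation argument of Proposition~\ref{regular maltsev preserved}; the reduction of an arbitrary regular epimorphism of $\widetilde{\CC}$ to a cofiltered limit of $i$ of regular epimorphisms of $\CC$ (level representation, levelwise (regular epi, mono)-factorisation, and invertibility of the mono part because regular epimorphisms are strong) is correct, and the level-representation step you flag as the main obstacle is precisely what the paper covers elsewhere by Lemma~5.1 of \cite{makkai}; the pullback-stability computation then goes through as you describe.

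The half that needs real tightening is the projective cover, and one of your justifications points at the wrong target. (1) The reason a morphism $X_\kappa \to i(B)$ factors through some stage $X_\mu$ with $\mu<\kappa$ is not cardinality control of the sets $X_\beta(D)$: it is that the limit of your chain in $\widetilde{\CC}$ is a filtered colimit in $\Lex(\CC,\Set)$, that filtered colimits there are computed pointwise, and that morphisms $X_\kappa \to i(B)$ are elements of $X_\kappa(B)$ (equivalently, representables are finitely presentable in $\Lex(\CC,\Set)$); no cardinal hypothesis beyond $\kappa$ being a limit ordinal is needed for this. (2) Where cardinality genuinely enters --- scheduling every lifting problem arising below $\kappa$ at a successor stage below $\kappa$ --- your bound is not yet an argument: $X_{\beta+1}=X_\beta\times_{i(B)}i(A)$ is a pushout in $\Lex(\CC,\Set)$, and colimits there are \emph{not} computed pointwise, so smallness of $\CC$ does not by itself ``control the growth of the sets $X_\beta(D)$''. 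The bound does hold, for instance because this pushout is generated, as a model of the finitary essentially algebraic theory presenting $\Lex(\CC,\Set)$, by the images of $X_\beta$ and $i(A)$, whence its size is at most $\max(|X_\beta|,|{\CC}|,\aleph_0)$; with that, the bookkeeping closes at any sufficiently large regular $\kappa$. These two points are fixable, but they are the actual content of the convergence step and must be written out rather than waved at.
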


We now prove that the regular Mal'tsev property is also `preserved' by the embedding $i \colon \CC \hookrightarrow \widetilde{\CC}$.

\begin{proposition} \label{regular maltsev preserved}
Let $\CC$ be a small regular Mal'tsev category. Then $\widetilde{\CC}$ is also a regular Mal'tsev category.
\end{proposition}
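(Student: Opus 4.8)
The plan is to use Theorem~\ref{covering} together with the characterisation of Mal'tsev categories via approximate Mal'tsev co-operations (Theorem~\ref{approximate co-operations}). By Theorem~\ref{covering}, $\widetilde{\CC}$ is already regular, so it remains only to show it is Mal'tsev. Since $\widetilde{\CC}$ is complete and cocomplete (Theorem~\ref{c tilde lemma}), it has binary coproducts, so by the equivalence (\ref{approx 1})$\Leftrightarrow$(\ref{approx 2}) of Theorem~\ref{approximate co-operations} it suffices to produce, for each $X \in \widetilde{\CC}$, an approximate Mal'tsev co-operation $p \colon Y \to 3X$ with approximation $a \colon Y \to X$ a regular epimorphism.

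First I would treat the case $X = i(C)$ for $C \in \CC$. Since $\CC$ is regular Mal'tsev, Theorem~\ref{approximate co-operations} gives an approximate Mal'tsev co-operation $p \colon Y \to 3C$ in $\CC$ with $a \colon Y \to C$ a regular epimorphism \emph{in $\CC$}. Now $i$ preserves colimits and finite limits (Theorem~\ref{c tilde lemma}(\ref{i preserves})), so in particular it preserves finite coproducts; hence $i$ carries this diagram to an approximate Mal'tsev co-operation on $i(C)$ in $\widetilde{\CC}$. One must check that $i(a)$ is still a regular epimorphism in $\widetilde{\CC}$: this follows because $i$ preserves regular epimorphisms (it preserves the coequaliser of the kernel pair, kernel pairs being finite limits and coequalisers being colimits, by Theorem~\ref{c tilde lemma}(\ref{i preserves})), using that in a regular category regular epimorphisms are exactly the coequalisers of their kernel pairs. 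So the claim holds for representables.

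For a general $X \in \widetilde{\CC}$, I would use the $\CC$-projective cover from Theorem~\ref{covering}: there is a regular epimorphism $e_X \colon \widehat{X} \twoheadrightarrow X$ with $\widehat{X}$ being $\CC$-projective. By Theorem~\ref{c tilde lemma}(\ref{cofiltered limit}), $\widehat{X}$ is a cofiltered limit of representables $i(C)$. Using that cofiltered limits commute with finite limits and finite colimits in $\widetilde{\CC}$ (Theorem~\ref{c tilde lemma}(\ref{commutativity of limits})), the construction of the universal approximate Mal'tsev co-operation $M(-)$ — a finite limit of a diagram built from finite coproducts — is compatible with this cofiltered limit, so the universal approximate Mal'tsev co-operation $M(\widehat{X})$ with its approximation $a^{\widehat{X}} \colon M(\widehat{X}) \to \widehat{X}$ is the cofiltered limit of the $M(i(C)) \to i(C)$. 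Each of those approximations is a regular epimorphism by the representable case, and a cofiltered limit of regular epimorphisms between the terms of such a cofiltered diagram is again a regular epimorphism in a regular category (this is a standard fact; alternatively, $\widehat{X}$ being $\CC$-projective means one can directly lift against regular epis to get a splitting of $a^{\widehat{X}}$ up to the structure needed). Hence $\widehat{X}$ admits an approximate Mal'tsev co-operation with regular epi approximation; precomposing a suitable pullback along $e_X$ — or rather, pulling the co-operation on $\widehat{X}$ along $e_X$ and postcomposing with $3 e_X$ — transports this to $X$, the approximation remaining a regular epimorphism since regular epimorphisms are pullback-stable and compose. This gives condition (\ref{approx 2}) for every $X \in \widetilde{\CC}$, so $\widetilde{\CC}$ is Mal'tsev.

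The main obstacle I anticipate is the passage through cofiltered limits: one must verify carefully that forming the universal approximate Mal'tsev co-operation commutes with cofiltered limits (this needs the diagram defining $M(X)$ to involve only finite limits and finite colimits, which it does — it is a pullback of a map between finite coproduct-powers of $X$), and that the resulting limit map $a^{\widehat{X}}$ is genuinely a regular epimorphism rather than merely an epimorphism. The cleanest route is probably to avoid the general cofiltered-limit argument for regular epis and instead exploit $\CC$-projectivity of $\widehat{X}$ directly together with the representable case, but either way this is the technical heart of the argument; the transport from $\widehat{X}$ to $X$ along the cover $e_X$ is then routine using pullback-stability of regular epimorphisms in the regular category $\widetilde{\CC}$.
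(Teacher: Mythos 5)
There is a genuine gap at the very base of your argument. Theorem~\ref{approximate co-operations} is only stated for regular categories \emph{with binary coproducts}, and a small regular Mal'tsev category $\CC$ is not assumed to have any coproducts; so the phrase ``an approximate Mal'tsev co-operation $p\colon Y\to 3C$ in $\CC$'' need not even make sense, and the representable case collapses. Nor can you sidestep this by forming $2\,i(C)$ and $3\,i(C)$ in $\widetilde{\CC}$: these are coproducts of representables in $\Lex(\CC,\Set)^{\op}$ and are not representable, so nothing can be ``transported along $i$'' from $\CC$, and proving directly that the universal approximation $a^{i(C)}$ is a regular epimorphism in $\widetilde{\CC}$ is essentially as hard as the proposition itself (the natural way to get it is via Theorem~\ref{approximate co-operations} applied \emph{in} $\widetilde{\CC}$, which presupposes that $\widetilde{\CC}$ is Mal'tsev --- circular).

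The second weak point is the assertion that ``a cofiltered limit of regular epimorphisms is again a regular epimorphism in a regular category (this is a standard fact)''. It is not: it fails already for cofiltered limits of surjections in $\Set$. In $\widetilde{\CC}$ it does hold, but only because of the special property in Theorem~\ref{c tilde lemma}(\ref{commutativity of limits}) that cofiltered limits commute with finite colimits: one must write each regular epimorphism as the coequaliser of its kernel pair, take the cofiltered limit of the kernel pairs, and use the commutation with coequalisers --- this is exactly the technical heart of the paper's proof, not a citable generality. Your fallback via $\CC$-projectivity of $\widehat{X}$ does not repair it either: $\CC$-projectivity only allows lifting against regular epimorphisms between objects of $\CC$, whereas $a^{\widehat{X}}$ has non-representable domain, and in any case invoking a lifting against it presupposes that it is a regular epimorphism, which is what you are trying to prove. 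The paper avoids both problems by using a different characterisation: condition~\ref{mal'tsev reflexive graphs} of Theorem~\ref{characterisations mal'tsev for regular} (regularity of the comparison map for reflexive graphs), combined with Makkai's lemma that every reflexive graph in $\widetilde{\CC}$ is a cofiltered limit of reflexive graphs in $\CC$, and then the kernel-pair/coequaliser commutation argument just described; this never requires coproducts in $\CC$. If you want to salvage your route, you would have to add the hypothesis that $\CC$ has binary coproducts (then your representable case and the transport along covers are fine, modulo replacing the ``standard fact'' by the commutation argument), but that proves a weaker statement than the proposition.
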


\begin{proof}
By Theorem~\ref{covering}, we already know that $\widetilde{\CC}$ is regular.
We are going to prove that $\widetilde{\CC}$ is a Mal'tsev category using Theorem~\ref{characterisations mal'tsev for regular}.\ref{mal'tsev reflexive graphs}.
So, let 
$$\cd{G \ar@<2pt>[r]^-{d} \ar@<-2pt>[r]_-{c} & X \ar@<3pt>@/^0.9pc/[l]^-{s}}$$
be a reflexive graph in $\widetilde{\CC}$.
By Lemma~5.1 in~\cite{makkai}, it is a cofiltered limit of reflexive graphs in $\CC$.
$$\cd{G \ar@<2pt>[rr]^-{d} \ar@<-2pt>[rr]_-{c} \ar[dd]_-{\lambda^1_i} && X \ar@<3pt>@/^12pt/[ll]^-{s} \ar[dd]^-{\lambda^0_i} \\ \\
G_i \ar@<2pt>[rr]^-{d_i} \ar@<-2pt>[rr]_-{c_i} && X_i \ar@<3pt>@/^12pt/[ll]^-{s_i}}$$ 
Since limits commute with limits, the pullback of $(d,c)$ along $(c,d)$ is the limit of the corresponding pullbacks arising from the reflexive graphs in $\CC$.
$$\cd{P \ar[ddd]_-{\lambda^2_i} \ar[rr] \ar[rd]_-{p} && G \ar[ddd]^(.6){\lambda^1_i}|!{[dl];[dr]}\hole \ar[rd]^-{(d,c)} & \\ & G \ar[ddd]_(.4){\lambda^1_i} \ar[rr]_(.25){(c,d)} &&
X \times X \ar[ddd]^-{\lambda^0_i \times \lambda^0_i} \\ \\
P_i \ar[rr]|!{[uur];[dr]}\hole \ar@{->>}[rd]_-{p_i} && G_i \ar[rd]^-{(d_i,c_i)} & \\ & G_i \ar[rr]_-{(c_i,d_i)} && X_i \times X_i}$$
Similarly, the kernel pair of $p$ is the cofiltered limit of the kernel pairs of the $p_i$'s.
$$\cd{R \ar[dd]_-{\lambda^3_i} \ar@<2pt>[r]^-{r} \ar@<-2pt>[r]_-{s} & P \ar[dd]^-{\lambda^2_i} \ar[r]^-{p} & G \ar[dd]^-{\lambda^1_i} \\ \\
R_i \ar@<2pt>[r]^-{r_i} \ar@<-2pt>[r]_-{s_i} & P_i \ar@{->>}[r]_-{p_i} & G_i}$$
Since $\CC$ is a Mal'tsev category, the $p_i$'s are regular epimorphisms, and so coequalisers of $r_i$ and $s_i$.
By Theorem~\ref{c tilde lemma}, cofiltered limits commute with coequalisers in $\widetilde{\CC}$.
Thus, $p$, which is the limit of the coequalisers of the $r_i$'s and $s_i$'s, is the coequaliser of their limits $r$ and $s$. Therefore $p$ is a regular epimorphism and
$\widetilde{\CC}$ is a regular Mal'tsev category.
\end{proof}

Note that this preservation property of the embedding $\CC \hookrightarrow \widetilde{\CC}$ can be generalised to a wide range of properties, see~\cite{jacqmin,JJ}.

\subsection{Proof of the embedding theorem}

We are now able to prove our main theorem.

\begin{theorem} \label{embedding theorem}
Let $\CC$ be a small regular Mal'tsev category and $\Sub(1)$ the set of subobjects of its terminal object 1. Then, there exists a faithful embedding
$\phi \colon \CC \hookrightarrow \Mod(\Gamma_{\Mal})^{\Sub(1)}$ which preserves and reflects finite limits, isomorphisms and regular epimorphisms.
Moreover, for each morphism $f \colon C \rightarrow C'$ in $\CC$, each $I \in \Sub(1)$ and each $s \in S_{\Mal}$,
$$(\Img \phi(f)_I)_s=\{(\phi(f)_I)_s(x) \, | \, x \in (\phi(C)_I)_s\}.$$
\end{theorem}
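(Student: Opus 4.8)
The plan is to factor $\phi$ through the Yoneda embedding $i\colon\CC\hookrightarrow\widetilde\CC=\Lex(\CC,\Set)^{\op}$. By Theorem~\ref{c tilde lemma} the functor $i$ is fully faithful, preserves finite limits and all colimits, and hence reflects finite limits; and, since $\CC$ is regular, $i$ also preserves and reflects regular epimorphisms (a regular epimorphism of $\CC$ is the coequaliser of its kernel pair, which $i$ preserves as a colimit of a limit; conversely, if $i(f)$ is a regular epimorphism it agrees with $i$ of the $\CC$-coequaliser of the kernel pair of $f$, so $f$ is that coequaliser by faithfulness). By Theorem~\ref{covering}, $\widetilde\CC$ is regular with $\CC$-projective covers, and by Proposition~\ref{regular maltsev preserved} it is regular Mal'tsev, so Theorem~\ref{approximate co-operations} applies: for every object $Y$ of $\widetilde\CC$ the approximation $a^Y\colon M(Y)\to Y$ of the universal approximate Mal'tsev co-operation is a regular epimorphism. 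It therefore suffices to construct, for each $I\in\Sub(1)$, a finite-limit-preserving functor $\Phi_I\colon\widetilde\CC\to\Mod(\Gamma_{\Mal})$ sending regular epimorphisms to sortwise surjective homomorphisms, whose underlying $\star$-sort functors $\widetilde\CC(P^I,-)$ are jointly faithful and jointly reflect regular epimorphisms on $\CC$, and then to set $\phi:=(\Phi_I\circ i)_{I\in\Sub(1)}$.

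To build $\Phi_I$, I would follow step by step the recursive construction of $\Gamma_{\Mal}$ with a recursively chosen family $(P_s)_{s\in S_{\Mal}}$ of $\CC$-projective objects of $\widetilde\CC$ (depending on $I$), declaring the sort-$s$ component of $\Phi_I(X)$ to be $\widetilde\CC(P_s,X)$ and interpreting an operation symbol $\sigma\colon\prod_i s_i\to s$ of $\Gamma_{\Mal}$ as a morphism $P_s\to\coprod_i P_{s_i}$, so that $\sigma^{\Phi_I(X)}$ is ``compose the cotuple $\coprod_i P_{s_i}\to X$ with this morphism''. Given $P_s$, the companion data $\alpha^s,\rho^s,\pi^s,\eta^s,\varepsilon^s$ are realised by choosing a $\CC$-projective cover $e\colon P_{(s,0)}\twoheadrightarrow M(P_s)$, setting $\bar a:=a^{P_s}e\colon P_{(s,0)}\twoheadrightarrow P_s$ (a regular epimorphism, being a composite of two such), taking $\alpha^s$ to be precomposition with $\bar a$ and $\rho^s$ precomposition with $P_{(s,0)}\to M(P_s)\to 3P_s$, using a $\CC$-projective cover of the kernel pair of $\bar a$ with its two legs for $P_{(s,1)}$ and $\eta^s,\varepsilon^s$, and letting $\pi^s$ be the partial inverse of $\alpha^s$; the term sorts $s_\theta,s'_\theta$ are handled in the same spirit, realising the conditions of Proposition~\ref{mod gamma regular} one object at a time (a $\widetilde\CC$-version of Lemma~\ref{lemma for regularity}). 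Functoriality in $X$ is clear. That $\Phi_I(X)$ is genuinely a $\Gamma_{\Mal}$-model is the routine-but-central verification: the identities $\rho^s(x,y,y)=\alpha^s(x)$, $\rho^s(x,x,y)=\alpha^s(y)$, $\rho^s(x,y,y)=\rho^s(y,y,x)$ hold by chasing the square defining $(p^{P_s},a^{P_s})$ exactly as in the proof of Theorem~\ref{approximate co-operations} (for $f,g\colon P_s\to X$ one routes $[f,g,g]\circ p^{P_s}$ through $[f,g]\colon 2P_s\to X$ and the vertical edge of that square to obtain $f\circ a^{P_s}$, and similarly otherwise), then precomposes with $e$; the other equations of $E_{\Mal}$ are immediate; and for each partial operation the prescribed domain $\{\eta(x)=\varepsilon(x)\}$ equals the image of the total companion $\alpha$, since a map out of the representing object coequalises the two legs of the kernel pair of the relevant regular epimorphism $\bar a$ iff it coequalises that kernel pair iff it factors, uniquely, through $\bar a$. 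Here it matters only that each such $\bar a$ is a regular epimorphism, so $\widetilde\CC(-,X)$ is injective on it; $\CC$-projectivity of the representing objects is instead what makes $\Phi_I$ send regular epimorphisms of $\widetilde\CC$ to sortwise surjections.

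It remains to read off the properties of $\phi$. Since $\widetilde\CC(P_s,-)$ preserves all limits and finite limits in $\Mod(\Gamma_{\Mal})$, hence in its powers, are computed sortwise as in $\Set$, each $\Phi_I$ preserves finite limits, so $\phi$ does. For the last assertion and for regular epimorphisms, factor $i(f)$ in $\widetilde\CC$ as a regular epimorphism $i(C)\twoheadrightarrow i(\Img f)$ followed by a monomorphism $i(\Img f)\rightarrowtail i(C')$, the image lying in $\CC$ because $\CC$ is regular; applying $\widetilde\CC(P_s,-)$ turns the first arrow into a surjection and keeps the second a monomorphism, at every sort, so $\Img(\phi(f)_I)=\phi(\Img f)_I$ and its underlying $S_{\Mal}$-sorted set is exactly the sortwise image of $\phi(f)_I$ --- which is the displayed formula. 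Consequently $\phi$ preserves regular epimorphisms, and the formula also shows it reflects them once the $\star$-sort family does (a regular epimorphism $\phi(f)$ forces each $\phi(f)_I$ to be sortwise surjective, hence $\widetilde\CC(P^I,i(f))$ surjective for all $I$); faithfulness of $\phi$ likewise reduces to joint faithfulness of that $\star$-sort family. Finally, reflection of monomorphisms, isomorphisms and finite limits follows formally from ``$\phi$ faithful, preserving finite limits and reflecting regular epimorphisms'' (the kernel pair of $f$ is sent to a discrete one, hence is discrete by faithfulness and preservation of kernel pairs; a morphism sent to an isomorphism is sent to a monomorphism and a regular epimorphism, hence is one; and a cone sent to a limit cone is compared to the genuine limit by a morphism sent to an isomorphism).

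Thus everything is bookkeeping except the choice of the base $\CC$-projective $P^I$ for each $I\in\Sub(1)$ and the proof that the resulting $|\Sub(1)|$-indexed family of hom-functors remains jointly faithful and jointly reflects regular epimorphisms on $\CC$; this is the quantitative heart of the statement and, I expect, the main obstacle. The key observation to exploit is that every object $C$ of $\CC$ has support $\mathrm{supp}(i(C))=\Img(i(C)\to i(1))=i(I_C)$ for a unique $I_C\in\Sub(1)$ (as $i$ preserves image factorisations), so one can group the objects of $\CC$ by support and take $P^I$ to be a $\CC$-projective cover of the coproduct of all $i(C)$ with $I_C=I$, rather than one cover per object, as in Barr's argument. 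The work is to show that this coarsening costs nothing: one must control coproducts of representables in $\widetilde\CC$ and the behaviour of $\CC$-projective covers under pulling back along summand inclusions (so as to extract, from a map $P^I\twoheadrightarrow\coprod_C i(C)$ into one summand, enough $\CC$-projective maps into each $i(C)$), or argue differently that for each $I$ a single $\CC$-projective already detects all parallel pairs and all non-regular-epimorphisms between objects of support $I$.
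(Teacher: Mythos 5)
Your architecture is the paper's own (Yoneda embedding into $\widetilde{\CC}$, Proposition~\ref{regular maltsev preserved}, $\CC$-projective covers, sorts interpreted as $\widetilde{\CC}(P_s,-)$ with operations realised by morphisms $P_s \to \coprod_i P_{s_i}$, the universal approximate Mal'tsev co-operation for $\rho^s$), but there is a genuine gap exactly where you flag ``the quantitative heart'': you never prove that one base projective per subobject of $1$ yields joint faithfulness and joint reflection of regular epimorphisms, and the directions you sketch (controlling projective covers of coproducts of representables, extracting projective maps onto each summand) are not what makes this work. The paper's resolution is short but specific: take $P_{\star}$ for the index $I$ to be the coproduct $\coprod \widehat{C''}$ of the $\CC$-projective covers of \emph{all} $C''$ whose image in $1$ is $I$ (a coproduct of $\CC$-projectives is $\CC$-projective, so no further cover is needed). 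Then for any $C'$ with image $I$ in $1$ one builds a single morphism $g \colon P_{\star} \to C'$ whose component on the summand $\widehat{C'}$ is the cover $e_{C'}$ itself, and whose component on any other summand $\widehat{C''}$ is a lift of $\widehat{C''} \to C'' \twoheadrightarrow I$ through the regular epimorphism $C' \twoheadrightarrow I$, using $\CC$-projectivity of $\widehat{C''}$. Since one component of $g$ is already a regular epimorphism, $g$ is one; hence if $\widetilde{\CC}(P_{\star},f)$ is surjective there is $h$ with $fh=g$, forcing $f$ to be a regular epimorphism, and the same $g$ gives reflection of monomorphisms, while faithfulness follows by applying this to the equaliser of a parallel pair identified by $\phi$. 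Without this argument your claims of faithfulness and of reflection of regular epimorphisms (and therefore of isomorphisms and finite limits, which you derive from them) are unsupported.

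A second, smaller gap concerns the ``moreover'' formula: sortwise injectivity of $\phi(m)_I$ for the mono part $m$ of the image factorisation of $f$ does not by itself give $(\Img\phi(f)_I)_s=\{(\phi(f)_I)_s(x)\}$, because the model-theoretic image is the smallest \emph{submodel} containing the sortwise image, and a partial operation $\pi$ could a priori be defined in $\phi(C')_I$ on an element $m\circ x$ without the resulting value lying in the sortwise image. One must check that this cannot happen: either as in the paper (if $\pi((\phi(f)_I)_{s'}(x))$ is defined, i.e.\ $fx$ factors through $l_{\alpha}$, use the image factorisation of $f$ together with $\CC$-projectivity of $P_s$ to produce $y$ with $fy$ equal to that value), or via the diagonal fill-in of the strong epimorphism $l_{\alpha}$ against the monomorphism $i(m)$, which shows the sortwise image of $\phi(m)_I$ is closed under the partial operations of $\phi(C')_I$. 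This is a short verification, but it is the entire content of that clause, and the reflection of regular epimorphisms you deduce from the formula depends on it.
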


\begin{proof}
We know that $\widetilde{\CC}$ is a regular Mal'tsev category (Proposition~\ref{regular maltsev preserved}).
In what follows, we will denote by $\widehat{X}$ the $\CC$-projective covering of $X \in \widetilde{\CC}$ given by Theorem~\ref{covering}.
If $C \in \CC$ and $P \in \Sub(1)$, we are going to construct $\phi(C)_P \in \Mod(\Gamma_{\Mal})$.
More precisely, we are going to construct a $\Gamma_{\Mal}$-model $\phi(C)_P$ satisfying the following conditions:
\begin{enumerate}
\item \label{phi sort} For each $s \in S_{\Mal}$, $(\phi(C)_P)_s= \widetilde{\CC}(P_s,C)$ for some $\CC$-projective object $P_s \in \widetilde{\CC}$.
\item \label{phi alpha} For each $\pi \colon s' \rightarrow s \in \Sigma_{\Mal} \setminus \Sigma_{t,\Mal}$ and its corresponding $\alpha \colon s \rightarrow s'$, there is a given
regular epimorphism $l_{\alpha} \colon P_{s'} \twoheadrightarrow P_s$ in $\widetilde{\CC}$ such that
\begin{align*}
\alpha \colon \widetilde{\CC}(P_s,C) &\rightarrow \widetilde{\CC}(P_{s'},C)\\
f &\mapsto fl_{\alpha} \qquad \qquad \qquad \qquad \qquad \qquad
\end{align*}
and
\begin{align*}
\pi \colon \widetilde{\CC}(P_{s'},C) &\rightarrow \widetilde{\CC}(P_s,C)\\
g &\mapsto \text{the unique }f \text{ such that }fl_{\alpha}=g
\end{align*}
where $\pi$ is defined if and only if such an $f$ exists. For the corresponding $\eta,\varepsilon \colon s' \rightarrow s''$, we consider the kernel pair $(v,w)$ of $l_{\alpha}$.
$$\cd{\widehat{R} \ar@{->>}[r]^-{e_{R}} & R \ar@<2pt>[r]^-{v} \ar@<-2pt>[r]_-{w} & P_{s'} \ar@{->>}[r]^-{l_{\alpha}} & P_s}$$
We require then $P_{s''}=\widehat{R}$,
\begin{align*}
\eta \colon \widetilde{\CC}(P_{s'},C) &\rightarrow \widetilde{\CC}(P_{s''},C)\\
g &\mapsto gve_{R}
\end{align*}
and
\begin{align*}
\varepsilon \colon \widetilde{\CC}(P_{s'},C) &\rightarrow \widetilde{\CC}(P_{s''},C)\\
g &\mapsto gwe_{R}.
\end{align*}
\item \label{phi p} For each sort $s \in S_{\Mal}$, we consider the universal approximate Mal'tsev co-operation $(p^{P_s},a^{P_s})$ on $P_s$
$$\cd{\widehat{M(P_s)} \ar@{->>}[r]^-{e_{M(P_s)}}
& M(P_s) \pb \ar[r]^-{p^{P_s}} \ar@{->>}[d]_-{a^{P_s}} &
3P_s \ar[d]^-{\left( \begin{smallmatrix} \iota_1 & \iota_1 \\ \iota_2 & \iota_1 \\ \iota_2 & \iota_2 \end{smallmatrix}\right)} \\
& P_s \ar[r]_-{(\iota_1,\iota_2)} & (2P_s)^2}$$
where $a^{P_s}$ is a regular epimorphism by Theorem~\ref{approximate co-operations}.
We require then $P_{(s,0)}=\widehat{M(P_s)}$ and
\begin{align*}
\rho^s \colon \widetilde{\CC}(P_{s},C)^3 &\rightarrow \widetilde{\CC}(P_{(s,0)},C)\\
(f,g,h) &\mapsto \left(\begin{smallmatrix} f \\ g \\ h \end{smallmatrix} \right) p^{P_s}e_{M(P_s)}.
\end{align*}
\item \label{phi mu} For each finitary term $\theta \colon \prod_{i=1}^m s_i \rightarrow s$ of $\Sigma_{\Mal}$,
there is a given morphism $l_{\mu_{\theta}} \colon P_{s_{\theta}} \rightarrow P_{s_1}+ \cdots + P_{s_m}$ such that
\begin{align*}
\mu_{\theta} \colon \widetilde{\CC}(P_{s_1},C) \times \cdots \times \widetilde{\CC}(P_{s_m},C) &\rightarrow \widetilde{\CC}(P_{s_{\theta}},C)\\
(f_1,\dots,f_m) &\mapsto \left(\begin{smallmatrix}
f_1\\ \vphantom{\int\limits^x}\smash{\vdots} \\
f_m
\end{smallmatrix} \right) l_{\mu_{\theta}}.
\end{align*}
\end{enumerate}
Since $\Gamma_{\Mal}$ is the union of the series
$$\Gamma^0 \subseteq \Delta^1 \subseteq \Gamma^1 \subseteq \cdots$$
of essentially algebraic theories, to construct a $\Gamma_{\Mal}$-model $\phi(C)_P$, it is enough to construct recursively a $\Gamma^n$-model for each $n \geqslant 0$
such that they agree on the common sorts and operations. Firstly, to define a $\Gamma^0$-model, we set $P_{\star}$ to be the coproduct of the $\widehat{C'}$'s for all
$C' \in \CC$ such that the image of the unique morphism $C' \rightarrow 1$ is $P \in \Sub(1)$.
This object $P_{\star}$ is $\CC$-projective since it is the coproduct of $\CC$-projective objects.

Now, we suppose we have defined a $\Gamma^n$-model  satisfying the above conditions. We are going to extend it to a $\Gamma^{n+1}$-model with the
same properties. Firstly, we extend it to a $\Delta^{n+1}$-model. Let $s \in S^n \setminus S^{n-1}$.
Condition~\ref{phi p} above imposes the constructions of $P_{(s,0)}$ and $\rho^s$.
Moreover, condition~\ref{phi alpha} with $l_{\alpha^s}=a^{P_{s}} e_{M(P_s)}$ from condition~\ref{phi p} defines $\alpha^{s}$, $\pi^{s}$, $P_{(s,1)}$, $\eta^{s}$
and $\varepsilon^{s}$. It follows then from the definitions that this indeed gives a $\Delta^{n+1}$-model which satisfies conditions~\ref{phi sort}--\ref{phi mu}.

It remains to extend it to a $\Gamma^{n+1}$-model. In order to simplify the proof, we are going to construct $P_{s_{\theta}}$, $l_{\mu_{\theta}}$ and $l_{\alpha_{\theta}}$
for each finitary term $\theta \colon \prod_{i=1}^m s_i \rightarrow s$ of $\Sigma^{'n+1}$ such that it matches the previous construction if $\theta$ is actually a
term of $\Sigma^{'n}$. Then, condition~\ref{phi alpha} will define $\alpha_{\theta}$, $\pi_{\theta}$, $P_{s'_{\theta}}$, $\eta_{\theta}$ and $\varepsilon_{\theta}$,
while condition~\ref{phi mu} imposes the definition of $\mu_{\theta}$. We are going to do it recursively in such a way that the equality
$$\alpha_{\theta}(\theta(f_1,\dots,f_m))=\mu_{\theta}(f_1,\dots,f_m)$$
holds for any cospan $(f_i \colon P_{s_i} \rightarrow C)_{i \in \{1,\dots,m\}}$ such that $\theta(f_1,\dots,f_m)$ is defined.

Firstly, let $\theta=p_j \colon \prod_{i=1}^m s_i \rightarrow s_j$ be a projection ($1 \leqslant j \leqslant m$). In this case, we define $P_{s_{\theta}}=P_{s_j}$,
$l_{\mu_{\theta}}= \iota_j \colon P_{s_j} \rightarrow P_{s_1} + \cdots + P_{s_m}$ the coproduct injection and $l_{\alpha_{\theta}}=1_{P_{s_j}}$.
Obviously, one has
$$\alpha_{\theta}(\theta(f_1,\dots,f_m))=f_j=
\left(\begin{smallmatrix} f_1\\ \vphantom{\int\limits^x}\smash{\vdots}\\ f_m \end{smallmatrix} \right) \iota_j=\mu_{\theta}(f_1,\dots,f_m)$$
for any cospan $(f_i \colon P_{s_i} \rightarrow C)_{i \in \{1,\dots,m\}}$.

Secondly, let $\theta \colon \prod_{i=1}^m s_i \rightarrow s'$ be a finitary term of $\Sigma^{'n+1}$ for which $l_{\mu_{\theta}}$ and $l_{\alpha_{\theta}}$ have been constructed.
If $\pi \colon s' \rightarrow s \in \Sigma^{'n+1} \setminus \Sigma_t^{'n+1}$ has corresponding $\alpha \colon s \rightarrow s'$,
we define $P_{s_{\pi(\theta)}}=P_{s_{\theta}}$, $l_{\alpha_{\pi(\theta)}}=l_{\alpha} l_{\alpha_{\theta}}$ and $l_{\mu_{\pi(\theta)}}=l_{\mu_{\theta}}$.
$$\cd{P_{s_{\pi(\theta)}}=P_{s_{\theta}} \ar[rr]^-{l_{\mu_{\pi(\theta)}}=l_{\mu_{\theta}}} \ar@{->>}[d]_-{l_{\alpha_{\theta}}} \ar@{->>}[rd]^-{l_{\alpha_{\pi(\theta)}}}
&& P_{s_1} + \cdots + P_{s_m}\\
P_{s'} \ar@{->>}[r]_-{l_{\alpha}} & P_s &}$$
If the cospan $(f_i \colon P_{s_i} \rightarrow C)_{i \in \{1,\dots,m\}}$ is such that $\theta(f_1,\dots,f_m) \colon P_{s'} \rightarrow C$ is defined,
we know from the previous step in the recursion that
$$\theta(f_1,\dots,f_m) l_{\alpha_{\theta}} = \alpha_{\theta}(\theta(f_1,\dots,f_m)) = \mu_{\theta}(f_1,\dots,f_m).$$
If moreover $\pi(\theta(f_1,\dots,f_m)) \colon P_s \rightarrow C$ is defined, we have $$\pi(\theta(f_1,\dots,f_m)) l_{\alpha} = \theta(f_1,\dots,f_m).$$
In this case,
\begin{align*}
\alpha_{\pi(\theta)}(\pi(\theta(f_1,\dots,f_m))) &= \pi(\theta(f_1,\dots,f_m)) l_{\alpha_{\pi(\theta)}}\\
&= \pi(\theta(f_1,\dots,f_m)) l_{\alpha} l_{\alpha_{\theta}}\\
&= \theta(f_1,\dots,f_m) l_{\alpha_{\theta}}\\
&= \mu_{\theta}(f_1,\dots,f_m)\\
&= \mu_{\pi(\theta)}(f_1,\dots,f_m).
\end{align*}

Eventually, let us suppose that $\sigma \colon \prod_{i=1}^r s'_i \rightarrow s \in \Sigma_t^{'n+1}$ is an operation symbol and, for each $1 \leqslant j \leqslant r$,
$\theta_j \colon \prod_{i=1}^m s_i \rightarrow s'_j$ is a finitary term of $\Sigma^{'n+1}$ for which $l_{\mu_{\theta_j}}$ and $l_{\alpha_{\theta_j}}$ have been defined.
We already have a corresponding morphism $l_{\sigma} \colon P_s \rightarrow P_{s'_1} + \cdots + P_{s'_r}$ such that
\begin{align*}
\sigma \colon \widetilde{\CC}(P_{s'_1},C) \times \cdots \times \widetilde{\CC}(P_{s'_r},C) &\rightarrow \widetilde{\CC}(P_s,C)\\
(f_1,\dots,f_r) &\mapsto \left(\begin{smallmatrix}
f_1\\ \vphantom{\int\limits^x}\smash{\vdots} \\ f_r
\end{smallmatrix} \right) l_{\sigma}.
\end{align*}
Let us consider the following diagram where the square is a pullback.
$$\cd{P_{s_{\theta}}=\widehat{U} \ar@{->>}[r]^-{e_U} & U \pb \ar@{->>}[d]_-{u_2} \ar[r]^-{u_1} & P_{s_{\theta_1}} + \cdots + P_{s_{\theta_r}}
\ar@{->>}[d]^-{l_{\alpha_{\theta_1}} + \cdots + l_{\alpha_{\theta_r}}}
\ar[rr]^-{\left(\begin{smallmatrix} l_{\mu_{\theta_1}} \\ \vphantom{\int\limits^x}\smash{\vdots}\\ l_{\mu_{\theta_r}} \end{smallmatrix} \right)} && P_{s_1} + \cdots + P_{s_m}\\
& P_s \ar[r]_-{l_{\sigma}} & P_{s'_1} + \cdots + P_{s'_r} &&}$$
Denoting the term $\sigma(\theta_1,\dots,\theta_r) \colon \prod_{i=1}^m s_i \rightarrow s$ by $\theta$, we define $P_{s_{\theta}}=\widehat{U}$, $l_{\alpha_{\theta}}=u_2 e_U$ and
$$l_{\mu_{\theta}}=\left(\begin{smallmatrix} l_{\mu_{\theta_1}} \\ \vphantom{\int\limits^x}\smash{\vdots} \\ l_{\mu_{\theta_r}} \end{smallmatrix} \right)u_1e_U.$$
Then, if the cospan $(f_i \colon P_{s_i} \rightarrow C)_{i \in \{1,\dots,m\}}$ is such that $\theta_j(f_1,\dots,f_m) \colon P_{s'_j} \rightarrow C$ is defined
for all $1 \leqslant j \leqslant r$,
{\allowdisplaybreaks
\begin{align*}
\alpha_{\theta}(\theta(f_1,\dots,f_m))&= \sigma(\theta_1(f_1,\dots,f_m),\dots,\theta_r(f_1,\dots,f_m)) l_{\alpha_{\theta}}\\*
&= \left(\begin{smallmatrix} \theta_1(f_1,\dots,f_m) \\ \vphantom{\int\limits^x}\smash{\vdots} \\ \theta_r(f_1,\dots,f_m) \end{smallmatrix} \right) l_{\sigma} u_2 e_U\\
&= \left(\begin{smallmatrix} \theta_1(f_1,\dots,f_m) \\ \vphantom{\int\limits^x}\smash{\vdots} \\ \theta_r(f_1,\dots,f_m) \end{smallmatrix} \right)
(l_{\alpha_{\theta_1}} + \cdots + l_{\alpha_{\theta_r}}) u_1 e_U\\
&= \left(\begin{smallmatrix} \alpha_{\theta_1}(\theta_1(f_1,\dots,f_m)) \\ \vphantom{\int\limits^x}\smash{\vdots}
\\ \alpha_{\theta_r}(\theta_r(f_1,\dots,f_m)) \end{smallmatrix} \right) u_1 e_U\\
&= \left(\begin{smallmatrix} \mu_{\theta_1}(f_1,\dots,f_m) \\ \vphantom{\int\limits^x}\smash{\vdots} \\ \mu_{\theta_r}(f_1,\dots,f_m) \end{smallmatrix} \right) u_1 e_U\\
&= \left(\begin{smallmatrix} f_1 \\ \vphantom{\int\limits^x}\smash{\vdots} \\ f_m \end{smallmatrix} \right)
\left(\begin{smallmatrix} l_{\mu_{\theta_1}} \\ \vphantom{\int\limits^x}\smash{\vdots} \\ l_{\mu_{\theta_r}} \end{smallmatrix} \right)u_1e_U\\*
&= \mu_{\theta}(f_1,\dots,f_m)
\end{align*}}%
using the previous steps in the recursion.

We have thus defined a $\Gamma^{n+1}$-model which satisfies conditions~\ref{phi sort}--\ref{phi mu}.
This concludes the recursive construction of our $\Gamma^n$-model for each $n \geqslant 0$. Considering them all together, we get a $\Gamma_{\Mal}$-model $\phi(C)_P$. 

Now, if $f \colon C \rightarrow C' \in \CC$ and $P \in \Sub(1)$, we define a morphism $\phi(f)_P \colon \phi(C)_P \rightarrow \phi(C')_P$ by
\begin{align*}
(\phi(f)_P)_s \colon \widetilde{\CC}(P_s,C) &\rightarrow \widetilde{\CC}(P_s,C')\\
g &\mapsto fg
\end{align*}
for all $s \in S_{\Mal}$. By conditions~\ref{phi alpha}--\ref{phi mu}, $\phi(f)_P$ is a $\Gamma_{\Mal}$-homomorphism. This defines the expected
functor $\phi \colon \CC \rightarrow \Mod(\Gamma_{\Mal})^{\Sub(1)}$. Let us now check that it satisfies all the required properties.

Since finite limits in $\Mod(\Gamma_{\Mal})^{\Sub(1)}$ are computed componentwise, to prove that $\phi$ preserves them, we only need to prove that
$\phi(-)_P \colon \CC \rightarrow \Mod(\Gamma_{\Mal})$ preserves finite limits for each $P \in \Sub(1)$.
Furthermore, since they are computed in each sort as in $\Set$, we only need to check that $(\phi(-)_P)_s \colon \CC \rightarrow \Set$ preserves finite limits
for all $P \in \Sub(1)$ and all $s \in S_{\Mal}$.
But, by the Yoneda lemma, $(\phi(-)_P)_s$ is isomorphic to $P_s \colon \CC \rightarrow \Set$ which preserves finite limits by definition.
Therefore, $\phi$ preserves them as well. 

Now, suppose that $f \colon C \rightarrow C' \in \CC$ is such that $(\phi(f)_P)_s$ is surjective for all $P \in \Sub(1)$ and all $s \in S_{\Mal}$.
Let $$\cd{C' \ar@{->>}[r]^-{p} & I \ar@{}[r]|(.25){}="A" \ar@{>->}"A";[r] & 1}$$ be the image factorisation of the unique morphism $C' \rightarrow 1$.
We recall that $I_{\star} = \coprod {\widehat{C''}}$ where the coproduct runs through all $C''$ such that the image of $C'' \rightarrow 1$ is $I$.
For each such $C''$, there exists a morphism $g_{C''}$ making the diagram
$$\cd{\widehat{C''} \ar@{->>}[r]^-{e_{C''}} \ar[d]_-{g_{C''}} & C'' \ar@{->>}[d] \\ C' \ar@{->>}[r]_-{p} & I \ar@{}[r]|(.25){}="A" \ar@{>->}"A";[r] & 1}$$
commutative since $\widehat{C''}$ is $\CC$-projective. We choose $g_{C'}=e_{C'}$ and consider the induced morphism $g \colon I_{\star} \twoheadrightarrow C'$
which is a regular epimorphism since $g \iota_{\widehat{C'}}=g_{C'}$ is.
But we have supposed that $f \circ - \colon \CC(I_{\star},C) \rightarrow \CC(I_{\star},C')$ is surjective.
So, there is a morphism $h \colon I_{\star} \rightarrow C$ such that $fh=g$, which implies that $f$ is also a regular epimorphism.
Moreover, since each $P_s$ is $\CC$-projective, this means that $f$ is a regular epimorphism
if and only if $(\phi(f)_P)_s$ is surjective for all $P \in \Sub(1)$ and all $s \in S_{\Mal}$.
In particular, $\phi$ preserves regular epimorphisms.

Now, let $f,f' \colon C \rightarrow C'$ be two morphisms of $\CC$ such that $(\phi(f)_P)_s=(\phi(f')_P)_s$ for all $P \in \Sub(1)$ and all $s \in S_{\Mal}$.
Let $e \colon E \rightarrowtail C$ be their equaliser.
Since $\phi$ preserves equalisers, $(\phi(e)_P)_s$ is a bijection for all $P \in \Sub(1)$ and all $s \in S_{\Mal}$. Hence, $e$ is a regular epimorphism and $f=f'$.
This shows that $\phi$ is faithful.

Let $f \colon C \rightarrow D \in \CC$ be such that $(\phi(f)_P)_s$ is injective for all $P \in \Sub(1)$ and all $s \in S_{\Mal}$. We want to prove that $f$ is a monomorphism.
So, suppose $h,k \colon C' \rightarrow C \in \CC$ are such that $fh=fk$. Let $g \colon I_{\star} \twoheadrightarrow C'$ be the regular epimorphism defined as above.
Thus, $fhg=fkg$.
Since $f \circ - \colon \widetilde{\CC}(I_{\star},C) \rightarrow \widetilde{\CC}(I_{\star},D)$
is injective, we know that $hg=kg$. Hence $h=k$ and $f$ is a monomorphism since $g$ is a regular epimorphism.
Therefore, $\phi$ reflects isomorphisms, finite limits and regular epimorphisms.

It remains to check that, for $f \colon C \rightarrow C' \in \CC$, $P \in \Sub(1)$ and $s \in S_{\Mal}$, 
$$(\Img \phi(f)_P)_s=\{(\phi(f)_P)_s(x) \, | \, x \in (\phi(C)_P)_s\}.$$
Consider $\pi \colon s' \rightarrow s \in \Sigma_{\Mal} \setminus \Sigma_{t,\Mal}$ and $x \in \widetilde{\CC}(P_{s'},C)$ such that $\pi((\phi(f)_P)_{s'}(x))$ is defined.
So, there exists $g \colon P_s \rightarrow C'$ making the square
$$\cd{P_{s'} \ar[r]^-{x} \ar@{->>}[d]_-{l_{\alpha}} & C \ar[d]^-{f} \\ P_s \ar[r]_-{g} & C'}$$
commute (with $\alpha \colon s \rightarrow s'$ corresponding to $\pi$).
Let $f=iq$ be the image factorisation of $f$. Since $l_{\alpha}$ is a regular epimorphism, there exists $g' \colon P_s \rightarrow \Img(f)$
such that $ig'=g$. Since $P_s$ is $\CC$-projective, there exists a morphism $y \colon P_s \rightarrow C$ such that $qy=g'$. Thus, $fy=g$ and
$(\phi(f)_P)_s(y)=g=\pi((\phi(f)_P)_{s'}(x))$. Therefore, in view of the description of the images in categories of $\Gamma$-models on page \pageref{image} for any
finitary essentially algebraic theory $\Gamma$, this concludes the proof.
\end{proof}

\section{Applications} \label{section applications}

Analogously to the metatheorems of~\cite{BB},
our embedding theorem gives a way to prove some statements in regular Mal'tsev categories in an `essentially algebraic way' as follows.

Consider a statement $P$ of the form $\psi \Rightarrow \omega$ where $\psi$ and $\omega$ are conjunctions of properties which can be expressed as
\begin{enumerate}
\item some finite diagram is commutative,
\item some finite diagram is a limit diagram,
\item some morphism is a monomorphism,
\item some morphism is a regular epimorphism,
\item some morphism is an isomorphism,
\item some morphism factors through a given monomorphism.
\end{enumerate}
Then, this statement $P$ is valid in all regular Mal'tsev $\V$-categories (for all universes $\V$) if and only if it is valid in $\Mod(\Gamma_{\Mal})$ (for all universes).
Indeed, in view of Proposition~\ref{mod gamma regular mal'tsev}, the `only if part' is obvious. Conversely, if $\CC$ is a regular Mal'tsev category,
we can consider it is small up to a change of universe. Then, by Theorem~\ref{embedding theorem}, it suffices to prove $P$ in $\Mod(\Gamma_{\Mal})^{\Sub(1)}$.
Since every part of the statement $P$ is `componentwise', it is enough to prove it in $\Mod(\Gamma_{\Mal})$.

At a first glance, one could think this technique will be hard to use in practice, in view of the difficult definition of $\Mod(\Gamma_{\Mal})$.
However, due to the additional property in our Theorem~\ref{embedding theorem}, we can suppose that the homomorphisms $f \colon A \rightarrow B$ considered in the statement $P$
have an easy description of their images, i.e., $$(\Img f)_s = \{f_s(a) \, | \, a \in A_s\}$$ for all $s \in S_{\Mal}$.
In particular, if $f$ is a regular epimorphism, $f_s$ will be a surjective function for all $s \in S_{\Mal}$.
Therefore, in practice, it seems we will never have to use the operations $\alpha_{\theta}$, $\mu_{\theta}$, $\eta_{\theta}$, $\varepsilon_{\theta}$
and $\pi_{\theta}$. They were built only to make $\Mod(\Gamma_{\Mal})$ a regular category.

We illustrate now how to use the embedding theorem to prove a result in a regular Mal'tsev category using an (essentially) algebraic argument.
We refer the reader to~\cite{BB} for a definition of the category $\Pt(\CC)$ of points of $\CC$.

\begin{lemma} (Proposition~4.1 in~\cite{bourn})
Let $\CC$ be a regular Mal'tsev category. Consider a commutative square in the category $\Pt(\CC)$ of points of $\CC$. This yields a cube where the vertical morphisms are split
epimorphisms with a given section. Suppose that the left and right faces of this cube are pullbacks and $p$ and $q$ are regular epimorphisms.
$$\xymatrix@C=30pt@R=20pt{
    X \times_Y Z \ar@<-2pt>@{->>}[dd] \ar[dr] \ar[rr]^-{t} & & U \times_V W \ar@<-2pt>@{->>}[dd]|!{[dl];[dr]}\hole \ar[dr] \\
    & Z \ar@<-2pt>@{->>}[dd]_(.65){g} \ar@{->>}[rr]^(.25){q} & & W \ar@<-2pt>@{->>}[dd]_-{k} \\
    X \ar@<-2pt>[uu] \ar[dr]_-{f} \ar@{->>}[rr]^(.7){p}|!{[ru];[dr]}\hole|!{[dr];[ur]}\hole & & U \ar@<-2pt>[uu]|!{[ul];[ur]}\hole \ar[dr]_(.4){h} \\
    & Y \ar@<-2pt>[uu]_(.35){g'} \ar@{->>}[rr]_-{r} & & V \ar@<-2pt>[uu]_-{k'} }$$
Then, the comparison map $t$ is also a regular epimorphism.
\end{lemma}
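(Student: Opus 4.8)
The plan is to apply the reduction principle set out just above. The statement is an implication $\psi\Rightarrow\omega$ in which the hypothesis $\psi$ says that a certain finite diagram commutes (the cube, together with the compatibility of the four vertical split epimorphisms with their chosen sections), that the left and right faces are limit diagrams, and that $p$ and $q$ are regular epimorphisms, while the conclusion $\omega$ says that $t$ is a regular epimorphism --- all of these being among the admissible predicates. Hence, by Proposition~\ref{mod gamma regular mal'tsev} and Theorem~\ref{embedding theorem}, it suffices to prove the Lemma inside $\Mod(\Gamma_{\Mal})$, where finite limits are computed sortwise in $\Set$. By the refinement recalled above we may moreover assume that the homomorphisms occurring in the statement have $(\Img f)_s=\{f_s(a)\mid a\in A_s\}$, so that such an $f$ is a regular epimorphism exactly when it is surjective in every sort; in particular the split epimorphisms (having sections) and the given $p,q$ are surjective in every sort, and it is enough to show that $t$ is surjective in every sort.

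So fix a sort $s\in S_{\Mal}$ and an element $(u,w)$ of $(U\times_V W)_s$; since the right face is a pullback this means $u\in U_s$, $w\in W_s$ and $h(u)=k(w)$, which I call $v$. Using surjectivity of $p$ and $q$, choose $x_0\in X_s$ with $p(x_0)=u$ and $z_0\in Z_s$ with $q(z_0)=w$. Commutativity of the bottom and back faces of the cube gives $r(f(x_0))=h(p(x_0))=h(u)=v$ and $r(g(z_0))=k(q(z_0))=k(w)=v$, so, writing $y_\ast:=f(x_0)$ and $y_0:=g(z_0)$, the elements $y_\ast,y_0\in Y_s$ lie over the same point $v\in V_s$. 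The only remaining point is to replace $z_0$ by some $z''\in Z_s$ with $g(z'')=y_\ast$ while keeping $q(z'')=w$: then $(x_0,z'')$ lies in $(X\times_Y Z)_s$ (the left face being a pullback) and $t(x_0,z'')=(u,w)$.

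For this correction I would use the Mal'tsev property of $\Mod(\Gamma_{\Mal})$ in its relational form --- difunctionality of relations --- rather than the ternary term of Theorem~\ref{mod gamma maltsev}, which in a general model is only partially defined. Let $\bar g\colon Y\to Z$ and $k'\colon V\to W$ be the given sections of $g$ and $k$; since the data form a square in $\Pt(\CC)$ these are respected, in particular $q\bar g=k'r$. Consider the relation $\Psi\rightarrowtail W\times Z$, the relational composite of the kernel pair of $g$ with the opposite of the graph of $q$, so that sortwise $w'\mathrel\Psi z'$ whenever there exists $z''$ with $q(z'')=w'$ and $g(z'')=g(z')$. As $\Mod(\Gamma_{\Mal})$ is a Mal'tsev category, $\Psi_s$ is difunctional in $\Set$. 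One checks $w\mathrel\Psi\bar g(y_0)$ (witness $z_0$), $k'(v)\mathrel\Psi\bar g(y_0)$ and $k'(v)\mathrel\Psi\bar g(y_\ast)$ (witnesses $\bar g(y_0)$ and $\bar g(y_\ast)$, using $q\bar g=k'r$ and $r(y_0)=r(y_\ast)=v$); difunctionality of $\Psi_s$ then forces $w\mathrel\Psi\bar g(y_\ast)$, which delivers a $z''\in Z_s$ with $q(z'')=w$ and $g(z'')=g(\bar g(y_\ast))=y_\ast$ --- exactly what was needed, and $s$ being arbitrary this proves that $t$ is a regular epimorphism.

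The hard part is not the shape of the chase but the bookkeeping around it. First, one must unfold ``commutative square in $\Pt(\CC)$'' into the cube correctly, identifying which of its twelve edges are the split epimorphisms and which compatibilities with the chosen sections are thereby available --- only these make the three membership checks go through. Second, since the operations of $\Gamma_{\Mal}$ are genuinely partial, the correction step really has to proceed through difunctionality of an honest subobject of $W\times Z$ and not through the Mal'tsev term; for the same reason one has to take a little care in identifying the sortwise components of $\Psi$, i.e.\ in keeping the whole argument within the class of configurations produced by the embedding $\phi$, for which images of the homomorphisms in play are computed sortwise.
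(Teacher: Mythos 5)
Your reduction to $\Mod(\Gamma_{\Mal})$ and the element chase up to the ``correction'' step are fine, but the last step of the correction has a genuine gap. In $\Mod(\Gamma_{\Mal})$ the relational composite $\Psi$ is formed using the (strong epi, mono)-factorisation, and images there are \emph{not} computed sortwise: as described on page~\pageref{image}, $\Img(f)_s$ consists of all defined terms $\tau(f(a_1),\dots,f(a_n))$, possibly involving the partial operations. Hence an element of $\Psi_s$ is in general only a term applied to pairs of the form $(q(z''),z')$ with $g(z'')=g(z')$, and need not itself admit such a witness. Difunctionality of the internal relation $\Psi$ does give $(w,\bar g(y_\ast))\in\Psi_s$, but the phrase ``which delivers a $z''\in Z_s$ with $q(z'')=w$ and $g(z'')=y_\ast$'' silently identifies $\Psi_s$ with the set-theoretic composite; that identification is exactly what fails for essentially algebraic theories with genuinely partial operations (it holds in a variety, which is why the argument looks plausible). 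Nor is it rescued by the extra clause of Theorem~\ref{embedding theorem}: the sortwise description of images is licensed only for the homomorphisms occurring in the statement being proved ($p$, $q$, $t$, the split epimorphisms), not for the auxiliary morphism $R[g]\rightarrow W\times Z$, $(z'',z')\mapsto (q(z''),z')$, whose image defines $\Psi$. One could try to repair this by observing that this auxiliary morphism is, up to isomorphism, $\phi$ of the corresponding morphism of $\CC$ (as $\phi$ preserves finite limits, regular epimorphisms and monomorphisms, hence factorisations), so that its image is again sortwise; but that is a strengthening of the metatheorem which you would have to state and justify, and your proof as written does not. Note also that your stated target, sortwise surjectivity of $t$, is stronger than necessary: regular epimorphy only requires $\Img(t)=U\times_V W$ in the essentially algebraic sense.

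The paper sidesteps all of this by using the Mal'tsev-type operations of $\Gamma_{\Mal}$ directly; your reason for avoiding them --- that the ternary term is only partially defined --- does not apply, since $\rho^s\colon s^3\rightarrow (s,0)$ and $\alpha^s$ are \emph{total} by construction, only $\pi^s$ is partial, and $\pi^s(\alpha^s(x))=x$ is a theorem. Setting $z'=\rho^s(g'f(x),g'g(z),z)$ one gets $(\alpha^s(x),z')\in (X\times_Y Z)_{(s,0)}$ with $t(\alpha^s(x),z')=\alpha^s(u,w)$, whence $(u,w)=\pi^s(t(\alpha^s(x),z'))\in\Img(t)_s$; this shows $t$ is a regular epimorphism without any witness extraction from a composite relation and without claiming sortwise surjectivity of $t$.
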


\begin{proof}
It is enough to prove this lemma in $\Mod(\Gamma_{\Mal})$ supposing that $p$ and $q$ are surjective in each sort.
So, let $s \in S_{\Mal}$, $u \in U_s$ and $w \in W_s$ be such that $h(u)=k(w)$ and let us prove $(u,w) \in \Img(t)_s$.
Since $p$ and $q$ are surjective, we can find $x \in X_s$ and $z \in Z_s$ such that $p(x)=u$ and $q(z)=w$.
Let $z'=\rho^s(g'f(x),g'g(z),z) \in Z_{(s,0)}$. Since $g(z')=\rho^s(f(x),g(z),g(z))=\alpha^s(f(x))=f(\alpha^s(x)),$
we can consider $(\alpha^s(x),z') \in (X \times_Y Z)_{(s,0)}$. Moreover, since
\begin{align*}
q(z') &= \rho^s(qg'f(x),qg'g(z),q(z))\\
&= \rho^s(k'hp(x),k'kq(z),q(z))\\
&= \rho^s(k'h(u),k'k(w),w)\\
&= \rho^s(k'k(w),k'k(w),w)\\
&= \alpha^s(w),
\end{align*}
we know that $t(\alpha^s(x),z')=(p(\alpha^s(x)),q(z'))=(\alpha^s(u),\alpha^s(w))=\alpha^s(u,w)$.
Therefore, we have $(u,w)=\pi^s(\alpha^s(u,w))=\pi^s(t(\alpha^s(x),z')) \in \Img(t)_s$.
\end{proof}



\end{document}